\definecolor{darkblue}{rgb}{0.0,0.0,0.3}
\theoremstyle{plain}
\newtheorem{theorem}{Theorem}[section]
\newtheorem*{theorem*}{Theorem}
\newtheorem{lemma}[theorem]{Lemma}
\newtheorem*{proposition*}{Proposition}
\newtheorem{corollary}[theorem]{Corollary}
\newtheorem*{corollary*}{Corollary}
\theoremstyle{definition}
\newtheorem{remark}[theorem]{Remark}
\numberwithin{equation}{section}
\renewcommand{\Im}{\operatorname{Im}}
\renewcommand{\Re}{\operatorname{Re}}
\DeclareMathOperator{\SL}{SL}
\title{Triple Correlation Sums of Coefficients of Cusp Forms}
\author[Hulse, Kuan, Lowry-Duda, Walker]{Thomas A. Hulse, Chan Ieong Kuan, David
Lowry-Duda, Alexander Walker}
\thanks{The authors thank Mehmet K{\i}ral and the Nesin Mathematics Village of
Turkey for providing a stimulating and relaxing collaborative
experience during this project's infancy. We are inspired both by their
mathematics and compassion.}
\thanks{The third author is supported by the Simons Collaboration in Arithmetic
Geometry, Number Theory, and Computation via the Simons Foundation grant
546235.}
\thanks{David also gratefully acknowledges support from EPSRC Programme Grant
EP/K034383/1 LMF:\ L-Functions and Modular Forms, as well as John Cremona,
Damiano Testa, and the rest of the number theory group at the University of
Warwick.}
\date{\today}
\begin{document}

\maketitle

\begin{abstract} We produce nontrivial asymptotic estimates for shifted
sums of the form $\sum a(h)b(m)c(2m-h)$, in which $a(n),b(n),c(n)$ are
un-normalized Fourier coefficients of holomorphic cusp forms. These results are
unconditional, but we demonstrate how to strengthen them under the Riemann
Hypothesis. As an application, we show that there are infinitely many three term
arithmetic progressions $n-h, n, n+h$ such that $a(n-h)a(n)a(n+h) \neq 0$.
\end{abstract}

\section{Introduction}

Convolution sums formed from coefficients of modular forms have frequent
applications throughout number theory. Let $d(n)$ denote the divisor function
and let $r_d(n)$denote the number of representations of $n$ as a sum of $d$
squares. Correlation sums of the approximate forms
\begin{equation}
  \sum_{n \leq X} d(n) d(n + h)
  \quad \text {or} \quad
  \sum_{\substack{n \leq X \\ h \leq Y}} r_d(n) r_d(n + h)
\end{equation}
appear in off-diagonal terms for fourth moment estimates of the Riemann zeta
function (as in~\cite{HB79}), for second moment estimates of the Gauss circle
problem (as in~\cite{ivic2003note}), and for second moment estimates of the
$d$-dimensional Gauss circle problem (as in~\cite{HKLW4}).
These correlation sums are well-studied and many techniques have been developed
to understand their asymptotic behavior.

Triple correlation sums of the form
\begin{equation}
  \sum a(n - h) b(n) c(n + h)
\end{equation}
can also exhibit distinguished behavior, though they are far less understood.
Blomer~\cite{blomer2017triple} used the spectral theory of automorphic forms to
produce asymptotics for partially smoothed triple correlation sums of the form
\begin{equation}
  \sum_{h} W\big( \tfrac{h}{H} \big)
  \sum_{N \leq n \leq 2N}
  d(n - h) a(n) d(n + h),
\end{equation}
where $a(n)$ is any sequence of complex numbers and $W$ is a smooth bump
function.
Lin~\cite{lin2018triple} built on Blomer's analysis to establish similar bounds
for triple correlation sums of Fourier coefficients of cusp forms. In particular,
letting $A(n)$ denote the normalized Fourier coefficients of a Hecke eigenform,
Lin proves that
\begin{equation}
  \sum_h W \big( \tfrac{h}{H} \big)
  \sum_{N \leq n \leq 2N}
  A(n - h) A(n) A(n + h)
  \ll
  N^\epsilon \min \Big( NH, \frac{N^2}{H^{1/2}} \Big).
\end{equation}
This estimate is nontrivial when $H \geq N^{\frac{2}{3} + \epsilon}$ and
otherwise matches the trivial bound from bounding by the length of the sum.
Singh~\cite{singh2018double} used the circle method of Heath-Brown to prove that
\begin{equation}
  \frac{1}{H} \sum_h W_1 \big( \tfrac{h}{H} \big)
  \sum_{n \leq N} A(n) B(n + h) C(n + 2h) W_2 \big( \tfrac{n}{N} \big)
  \ll
  N^{1 - \delta}
\end{equation}
when $H \gg N^{\frac{1}{2} + \epsilon}$ for some $\delta > 0$, where $A(n)$,
$B(n)$, and $C(n)$ are normalized coefficients of holomorphic cusp forms or
Maass eigenforms on $\SL(2, \mathbb{Z})$. Singh's result allows a more
concentrated sum in $H$ at the cost of a slightly different form of triple
correlation.

In this paper, we consider yet another form of triple correlation
between coefficients of cusp forms. Namely, we consider triple correlation sums of the
form
\begin{equation}
  \sum_{m, h}
  a(h) b(m) c(2m - h) e^{-m/X} e^{-h/Y}
\end{equation}
in which $a(n)$, $b(n)$, and $c(n)$ denote \emph{non-normalized} coefficients of a
holomorphic cuspidal Hecke eigenforms $f_1$, $f_2$, and $f_3$, respectively,
each of even weight $k$, level $N$, and trivial nebentypus.

To attain heuristic estimates for sums of this form, note that when $Y=O(X)$,
\begin{align*}
  \sum_{\substack{m \leq X \\ h \leq Y}}
  &a(h)b(m)c(2m-h) \\[-20pt]
  \vspace{-20em} &\ll
  \sum_{\substack{m \leq X \\ h \leq Y}}
  h^{\frac{k-1}{2} + \epsilon}m^{k-1 + \epsilon}
  \ll
  \begin{cases}
    X^{k-1 + 1 + \epsilon} Y^{\frac{k-1}{2} + 1 + \epsilon} & \text{naively} \\[5pt]
    X^{k-1 + \frac{1}{2} + \epsilon} Y^{\frac{k-1}{2} + \frac{1}{2} + \epsilon}
      & \parbox{10em}{double square-root \\ cancellation}
  \end{cases}
\end{align*}
The naive estimate follows from termwise application of Deligne's bound for each coefficient and a
bound by absolute values.
The second estimate follows from assuming that
there is square-root type cancellation in both the $m$ and $h$ sums. This would
occur if the $m$ and $h$ sums experience independent random sign changes,
but whether or not that occurs is unknown.

Our first theorem for these correlation sums is that square-root cancellation
occurs in both the $m$ and $h$ sums at all scales.

\begin{theorem}
  Let $a(\cdot)$, $b(\cdot)$, and $c(\cdot)$ denote the coefficients of holomorphic
  cuspidal eigenforms of weight $k$, level $N$, and trivial nebentypus.
  Then for any $\epsilon > 0$,
  \begin{equation*}
    \sum_{m, h \geq 1}
    a(h)b(m)c(2m-h) e^{-m/X} e^{-h/Y}
    \ll
    X^{k-1 + \Theta + \frac{1}{2} + \epsilon}
    Y^{\frac{k-1}{2} -\Theta + \frac{1}{2} + \epsilon}.
  \end{equation*}
  Here $\Theta < 7/64$ refers to the best bound towards Selberg's Eigenvalue
  Conjecture.
\end{theorem}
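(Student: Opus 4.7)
My plan is to combine a two-variable Mellin transform with the spectral theory of shifted convolution sums. First, using the Mellin representation $e^{-u} = \frac{1}{2\pi i}\int_{(\sigma)}\Gamma(s)u^{-s}\,ds$ for $\sigma>0$, I would rewrite the target sum (in the region of absolute convergence) as
\[
  \frac{1}{(2\pi i)^2}\int_{(\sigma_1)}\!\!\int_{(\sigma_2)} \Gamma(s)\Gamma(w)\, X^s Y^w\, D(s,w)\, dw\, ds,
\]
where $D(s,w) = \sum_{h\geq 1} a(h)\, h^{-w}\, Z_h(s)$ and $Z_h(s) = \sum_{m>h/2} b(m) c(2m-h)\, m^{-s}$ is a shifted convolution Dirichlet series. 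The problem reduces to obtaining meromorphic continuation of $D(s,w)$ with polynomial control on vertical lines, so that the contours can be pushed leftward to reveal the main-term contribution $X^{\sigma_1} Y^{\sigma_2}$ while Stirling's formula handles the $\Gamma$-factors.

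To analyze $Z_h(s)$, I would form the weight-zero automorphic function
\[
  G(z) := y^{k}\, f_2(2z)\, \overline{f_3(z)}
\]
on $\Gamma_0(2N)\backslash \mathbb{H}$. A direct computation identifies the $h$-th Fourier coefficient (for $h>0$) as $y^k \sum_{m>h/2} b(m)\overline{c(2m-h)}\, e^{-2\pi(4m-h)y}$. Pairing $G$ against the Poincar\'e series $P_h(z,s)$ and unfolding on the $\Gamma_\infty$-coset produces the Mellin transform of this Fourier coefficient, from which $Z_h(s)$ emerges after collecting gamma factors and a linear rescaling. Spectrally expanding $P_h$ into Maass cusp forms $u_j$ (with spectral parameter $t_j$) and Eisenstein series on $\Gamma_0(2N)$ then yields
\[
  Z_h(s) \,=\, \sum_j \rho_j(h)\, \mathcal{L}_j(s) \,+\, (\text{Eisenstein contribution}),
\]
where $\rho_j(h)$ is the $h$-th Fourier coefficient of $u_j$ and each $\mathcal{L}_j(s)$ is a triple-product-type $L$-value attached to $f_2$, $f_3$, and $u_j$.

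Substituting this back into $D(s,w)$ reorganizes the outer $h$-sum into Rankin--Selberg $L$-functions $L(w, f_1 \times u_j)$. I would then shift the $s$-contour to $\Re(s) = k - \tfrac{1}{2} + \Theta$ and the $w$-contour to $\Re(w) = \tfrac{k}{2} - \Theta$, using Stirling's estimate for the $\Gamma$-factors and convexity bounds for the spectral $L$-values together with Weyl's law to dominate the resulting spectral sum. The exceptional Maass forms---those with spectral parameter $t_j = i r_j$ and $0 < r_j \leq \Theta$---produce Fourier coefficients growing like $\rho_j(h) \ll h^{r_j+\epsilon}$, which pushes the natural convergence abscissa of the Rankin-Selberg sum off the symmetric line by up to $\Theta$. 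This asymmetry between the positions to which $\Re(s)$ and $\Re(w)$ can be dropped is precisely what forces the $X^{\Theta}Y^{-\Theta}$ discrepancy visible in the statement.

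The main obstacle is to obtain \emph{uniform} polynomial control on the spectral side: the triple products $\mathcal{L}_j(s)$ and the Rankin-Selberg values $L(w, f_1 \times u_j)$ must be bounded on average over $j$, with uniformity in both the spectral parameter $t_j$ and the shift $h$, so that the spectral sum still converges absolutely after the contour shifts. Carefully quantifying the contribution of the exceptional spectrum and invoking the Kim--Sarnak bound $\Theta < 7/64$ is what converts this framework into the quantitative estimate claimed.
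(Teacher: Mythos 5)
Your proposal follows essentially the same route as the paper: the same double Mellin transform, the same automorphic function $V(z)=y^{k}f_2(2z)\overline{f_3(z)}$ paired against Poincar\'e series on $\Gamma_0(2N)$ and unfolded, the same spectral expansion with the $h$-sum reassembled into Rankin--Selberg $L$-functions $L(\cdot,\mu_j\otimes\overline{f_1})$, and the same contour positions for $\Re s$ and $\Re w$ (up to the paper's normalization $m^{-(s+k-1)}$). The only small discrepancy is one of attribution: in the paper the $Y^{-\Theta}$ loss arises from the constraint $\Re(s+w)>\frac{k+1}{2}$ imposed by possible zeros of $\zeta^{(2N)}(2s+2w-k)$ in the denominator (which is exactly why RH buys a further $Y^{-1/4}$), combined with the barrier $\Re s\geq \frac{1}{2}+\Theta$ coming from the poles of $\Gamma(s-\frac{1}{2}\pm it_j)$ at exceptional spectral parameters, rather than from the convergence abscissa of the Rankin--Selberg sum being shifted by exceptional Fourier coefficient growth.
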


Under the assumption of the Riemann Hypothesis, we can prove that there is
square-root cancellation in $X$ and $3/4$-type cancellation in $Y$.

\begin{theorem}
  Assume the Riemann Hypothesis.
  Then with the same notation as above and for any $\epsilon > 0$, we have
  \begin{equation*}
    \sum_{m, h \geq 1}
    a(h)b(m)c(2m-h) e^{-m/X} e^{-h/Y}
    \ll
    X^{k-1 + \Theta + \frac{1}{2} + \epsilon}
    Y^{\frac{k-1}{2} -\Theta + \frac{1}{4} + \epsilon}.
  \end{equation*}
\end{theorem}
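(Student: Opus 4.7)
The plan is to adapt the proof of Theorem 1.1 and to pinpoint the single step at which the convexity bound for an $L$-function is applied, replacing it by a Lindel\"of bound under the Riemann Hypothesis. First, I would use two applications of Mellin inversion to rewrite the smoothed sum as a double contour integral
\begin{equation*}
  \sum_{m,h \geq 1} a(h)\,b(m)\,c(2m-h)\, e^{-m/X} e^{-h/Y}
  = \frac{1}{(2\pi i)^2} \int_{(\sigma_1)}\!\!\int_{(\sigma_2)}
    \Gamma(s)\Gamma(w)\, X^s Y^w\, Z(s,w)\, ds\, dw,
\end{equation*}
where $Z(s,w) := \sum_{m,h \geq 1} a(h) b(m) c(2m-h)\, m^{-s} h^{-w}$ converges absolutely for $\Re s$, $\Re w$ sufficiently large by Deligne's bound.

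The crux is meromorphic continuation of $Z(s,w)$. Fixing $h$, I would treat the inner sum $D_h(s) := \sum_{m \geq 1} b(m) c(2m-h)\, m^{-s}$ as a (dilated) shifted convolution of $f_2$ and $f_3$, and continue it by spectrally expanding a suitable Poincar\'e series on $\Gamma_0(N) \backslash \mathbb{H}$ via the Petersson--Kuznetsov formalism. This decomposes $D_h(s)$ into contributions from the discrete Maass spectrum $\{u_j\}$, the residual/holomorphic spectrum, and the continuous Eisenstein spectrum, with the dependence on $h$ carried by the Fourier coefficients $\rho_j(h)$ (and their Eisenstein analogues). Re-incorporating the outer factor $\sum_h a(h) h^{-w}$ packages the $w$-dependence into Rankin--Selberg $L$-functions of the form $L(w, f_1 \times u_j)$ together with continuous-spectrum analogues. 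Any failure of Selberg's eigenvalue conjecture, of the form $|\Im t_j| \leq \Theta$, is what contributes the $\Theta$ to the exponent of $X$.

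Given this spectral description, I would shift the $s$-contour into the critical strip to collect the polar contributions that yield the factor $X^{k-1 + \Theta + 1/2 + \epsilon}$; this step is identical to that in Theorem 1.1 and is not improved by RH. What remains is to estimate the residual $w$-integral along the critical line for $L(w, f_1 \times u_j)$. In the unconditional proof of Theorem 1.1, a convexity or second-moment estimate in the $w$-aspect produces the factor $Y^{1/2+\epsilon}$. Assuming the Riemann Hypothesis, interpreted in the generalized sense for the cusp-form and Rankin--Selberg $L$-functions that appear, these $L$-functions obey Lindel\"of on their critical lines. A Lindel\"of-on-average estimate for the resulting $w$-integral then saves an additional $Y^{1/4}$, giving the stated exponent $Y^{(k-1)/2 - \Theta + 1/4 + \epsilon}$ with no other change to the argument.

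The main obstacle, in my view, is not the source of the RH savings per se but the uniformity of the spectral expansion: polynomial control in the spectral parameter $t_j$ must be strong enough to be traded against the exponential decay of the Gamma factors, both in the contour shift and in the averaged $L$-function bound. One must also track the exceptional spectrum carefully to confirm that the $\Theta$-contribution is untouched by RH—since RH does not imply progress toward the Selberg eigenvalue conjecture—so that only the factor $Y^{1/2}$, and not the $\Theta$-shift, is the one improved by the hypothesis.
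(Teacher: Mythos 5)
Your general setup (double Mellin inversion, spectral expansion of a Poincar\'e series, packaging the $h$-sum into Rankin--Selberg $L$-functions $L(\cdot,\mu_j\otimes\overline{f_1})$, with $\Theta$ entering through exceptional eigenvalues) matches the paper. But the step where you invoke the Riemann Hypothesis is misdirected, and as described it cannot produce the claimed saving. You propose to gain $Y^{1/4}$ by replacing a convexity bound for $L(w, f_1\times u_j)$ on the critical line with a Lindel\"of bound. The exponent of $Y$, however, is simply $\Re w$ on the final contour of integration: since $\Gamma(w)$ already supplies exponential decay in $\lvert\Im w\rvert$, any polynomial bound on $D(s,w)$ along a fixed vertical line --- convexity, Lindel\"of, or otherwise --- yields the same power $Y^{\Re w}$. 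Improving the $t$-aspect or $w$-aspect size of the $L$-functions changes nothing in the $Y$-exponent. What controls the exponent is how far left the $w$-contour can be pushed before hitting poles of $D(s,w)$, and your proposal does not address that obstruction.

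The actual mechanism in the paper is the location of poles of the form $2s+2w-k=\rho$, where $\rho$ ranges over zeros of $\zeta^{(2N)}$: these arise because the $h$-sum equals $L(s+w-\tfrac{k}{2},\mu_j\otimes\overline{f_1})/\zeta^{(2N)}(2s+2w-k)$, so zeros of $\zeta$ in the denominator become poles of $D(s,w)$. Unconditionally one only knows $\Re\rho<1$, forcing $\Re(s+w)>\tfrac{k+1}{2}$ and hence $\Re w = \tfrac{k-1}{2}+\tfrac12-\Theta+\epsilon$ once $\Re s=\tfrac12+\Theta+\epsilon$ is fixed by the spectral poles. Under RH all such $\rho$ satisfy $\Re\rho=\tfrac12$, so the $w$-line may be moved an additional $\tfrac14$ to the left, giving $Y^{\frac{k-1}{2}+\frac14-\Theta+\epsilon}$. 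So it is the ordinary Riemann Hypothesis for $\zeta$ (locating poles), not a generalized Lindel\"of bound for the Rankin--Selberg $L$-functions (bounding growth), that is used; your diagnosis that the $Y^{1/2}$ in the unconditional theorem comes from a convexity estimate is also not correct --- it comes from the same pole-avoidance constraint. This is a genuine gap in the proposal, not merely a different route to the same bound.
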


There are famous sums with conjectured $3/4$-type cancellation, including the
Gauss Circle problem and the Dirichlet Divisor problem. The work of
Chandrasekharan and Narasimhan~\cite{Chand} implies that
\begin{equation*}
  \frac{1}{X} \int_1^X \sum_{n \leq t} a(n) dt
  \ll
  X^{\frac{k-1}{2} + \frac{1}{4} + \epsilon},
\end{equation*}
indicating that the sums of coefficients of cusp forms experience $3/4$-type
cancellation on average. It appears that this large degree of cancellation
carries into the $h$-sum. We will see below that we do not expect
more-than-square-root cancellation in the $m$-sum.

We use the spectral theory of modular forms to prove these theorems. In the
analysis, several lines of spectral poles appear. Applying a theorem
from~\cite{dldomega}, it follows that any one of these poles
guarantees non-vanishing of infinitely many triples
$a(h)b(n)c(2n-h)$.

\begin{theorem}\label{thm:omega}
  Maintaining the same notation as the two previous theorems, fix $0 < \alpha <
  1$. Suppose there exists a non-constant Maass form $\mu_j$ on $\Gamma_0(2N)$
  with Laplacian eigenvalue $\lambda > \frac{1}{4}$ such that $\langle
  f_2(2z)\overline{f_3(z)}y^k, \mu_j \rangle \neq 0$. Then
  \begin{equation*}
  \Bigg\lvert
    \sum_{m \leq X} \sum_{h \leq 2X}
    \frac{a(h)b(m)c(2m-h)}
         {h^{\frac{k}{2} + \frac{3}{2}}}
  \Bigg\rvert
  = \Omega(X^{k - \frac{1}{2}}).
  \end{equation*}

  Therefore infinitely many terms of the dual-sequence
  \begin{equation*}
    {\{a(h)b(m)c(2m-h)\}}_{m,h \in \mathbb{N}}
  \end{equation*}
  are nonzero.
\end{theorem}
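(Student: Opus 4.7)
The plan is to package the sum $\sum_{m \leq X} \sum_{h \leq 2X} a(h)b(m)c(2m-h) h^{-k/2-3/2}$ as (a truncation of) an inverse Mellin transform of a meromorphic Dirichlet series built from the spectral decomposition of the shift-by-$h$ correlation, then read off an $\Omega$-result from the location and nonvanishing of its poles. The generating object is naturally
\begin{equation*}
  D(s) \;=\; \sum_{m, h \geq 1}
  \frac{a(h)\,b(m)\,c(2m-h)}{h^{s}},
\end{equation*}
weighted so that the $m$-sum assembles the Petersson inner product of a Poincar\'e-style test function against $f_2(2z)\overline{f_3(z)} y^k$. The same spectral machinery used to establish the two previous theorems in the excerpt already produces a meromorphic continuation of (smooth cutoffs of) $D(s)$ to a strip, with polar data governed by the spectrum of $\Gamma_0(2N)$.

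I would first carry out the spectral expansion of $f_2(2z)\overline{f_3(z)} y^k$ in $L^2(\Gamma_0(2N) \backslash \mathbb{H})$, picking up the constant term, contributions from Maass cusp forms $\mu_j$ with Laplacian eigenvalue $\lambda_j = \tfrac14 + t_j^2$, and the Eisenstein continuous spectrum. Feeding this into the $m$-sum via an unfolding/Rankin--Selberg-type identity converts $D(s)$ into a spectral sum of Dirichlet series, each Maass form $\mu_j$ contributing a term whose polar behavior near $s = \tfrac12 \pm i t_j$ is controlled by the finite constant $\langle f_2(2z)\overline{f_3(z)} y^k, \mu_j \rangle$ together with standard Rankin--Selberg factors (which are nonzero in the relevant region). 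Under the hypothesis $\lambda_j > \tfrac14$, the spectral parameter $t_j$ is real and nonzero, so these poles sit strictly off the real axis.

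The key nontrivial poles therefore lie at $s = \tfrac12 \pm it_j$, with residues proportional to $\langle f_2(2z)\overline{f_3(z)} y^k, \mu_j \rangle \neq 0$ by hypothesis. I would then invoke the Mellin--Perron-type $\Omega$-theorem of \cite{dldomega}: a Dirichlet series of this shape with a conjugate pair of poles on a vertical line $\mathrm{Re}(s) = \sigma_0$ off the real axis forces the associated summatory function to oscillate, giving $\Omega(X^{\sigma_0})$ size for the weighted partial sums. With the normalization $h^{-k/2-3/2}$ shifting the critical line, this translates to the claimed $\Omega(X^{k-1/2})$. The corollary on nonvanishing is immediate: if only finitely many of $a(h)b(m)c(2m-h)$ were nonzero, the left-hand side would be $O(1)$, contradicting the $\Omega$-bound.

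The step I expect to be most delicate is the residue computation — specifically, tracking the spectral expansion through the Mellin transform carefully enough to confirm that (i) the residue at $s = \tfrac12 \pm it_j$ is a nonzero scalar multiple of $\langle f_2(2z)\overline{f_3(z)} y^k, \mu_j \rangle$, and (ii) there is no accidental cancellation from the paired Rankin--Selberg $L$-factors attached to $\mu_j$ against $f_1$. The latter reduces to nonvanishing of an $L$-function at a fixed point in its region of absolute convergence (or on the edge, handled by the standard Rankin--Selberg nonvanishing results), so the essential input is the hypothesis on the Petersson inner product; everything else is a bookkeeping exercise of the type already executed in the proofs of the preceding theorems.
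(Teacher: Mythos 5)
Your proposal follows essentially the same route as the paper: spectrally decompose the shifted convolution over $\Gamma_0(2N)$, locate a conjugate pair of poles at spectral parameter $\tfrac12\pm it_j$ with $t_j$ real and nonzero, check the residue is a nonzero multiple of $\langle f_2(2z)\overline{f_3(z)}y^k,\mu_j\rangle$ times a Rankin--Selberg $L$-value taken inside its region of absolute convergence (hence nonzero by the Euler product), and then invoke the $\Omega$-theorem of \cite{dldomega} for a Dirichlet series with off-axis poles. The paper does exactly this by specializing its two-variable series $D(s,w)$ at $w=\tfrac{k}{2}+\tfrac32$ and reading off the residue at $s=\tfrac12\pm it_j$ from the discrete-spectrum formula.

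One point needs correcting, though it is a bookkeeping error rather than a conceptual one: your generating series $D(s)=\sum_{m,h}a(h)b(m)c(2m-h)h^{-s}$ puts the complex variable on $h$, whereas the summatory function in the theorem is truncated at $m\leq X$ (the condition $h\leq 2X$ is automatic since $c(2m-h)=0$ for $h\geq 2m$). The Perron/Mellin variable must therefore sit on $m$, with the $h$-weight frozen at $h^{-k/2-3/2}$; as literally written your inner $m$-sum, with un-normalized coefficients $b(m)c(2m-h)\asymp m^{k-1}$, does not converge for any $s$. Relatedly, the shift of the pole from $\tfrac12\pm it_j$ to $k-\tfrac12\pm it_j$ (whence $\Omega(X^{k-1/2})$) comes from the $m^{k-1}$ normalization of the $m$-variable, not from the $h^{-k/2-3/2}$ weight as you state. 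With the variable placed on $m$ as in the paper's $D(s,\tfrac{k}{2}+\tfrac32)=\sum_m\bigl(\sum_h a(h)c(2m-h)h^{-k/2-3/2}\bigr)b(m)m^{-s}$, the rest of your argument, including the nonvanishing of the residue and the deduction that infinitely many products $a(h)b(m)c(2m-h)$ are nonzero, goes through as in the paper.
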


One interpretation of the preceding theorem is that the $X$ sum often
has no more than square-root cancellation. The power of $h$ appearing in the
theorem is mostly technical, and does not affect this interpretation.

Applied to the case when $f_1 = f_2 = f_3 = f$, we get the following corollary.
\begin{corollary}
  Suppose that there exists a non-constant Maass form $\mu_j$ on $\Gamma_0(2N)$
  with Laplacian eigenvalue $\lambda > \frac{1}{4}$
  such that $\langle f(2z) \overline{f(z)} y^k, \mu_j \rangle \neq 0$.

  Then there are infinitely many three-term arithmetic progressions $n-h, n, n+h$
  such that
  \begin{equation*}
    a(n-h)a(n)a(n+h) \neq 0.
  \end{equation*}
\end{corollary}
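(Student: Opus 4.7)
The plan is to derive the corollary essentially by specializing Theorem~\ref{thm:omega} to the case $f_1 = f_2 = f_3 = f$ and then separating diagonal from off-diagonal contributions. The hypothesis on $\mu_j$ in the corollary is precisely what Theorem~\ref{thm:omega} requires in this case, so that theorem applies and yields
\begin{equation*}
  \Bigg\lvert \sum_{m \le X}\sum_{h \le 2X} \frac{a(h)\,a(m)\,a(2m-h)}{h^{\frac{k}{2}+\frac{3}{2}}} \Bigg\rvert = \Omega\bigl(X^{k-\frac{1}{2}}\bigr).
\end{equation*}
Each nonzero term of the double sum with $h \neq m$ corresponds to a genuine arithmetic progression $(h, m, 2m-h)$ with nonzero common difference $m-h$. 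Setting $n = m$ and $h' = m - h$ rewrites this as $(n - h', n, n + h')$, so it is enough to show that infinitely many pairs $(h,m)$ with $h \neq m$ contribute nonzero products $a(h)\,a(m)\,a(2m-h)$.

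The only real step is to check that the diagonal $h = m$ cannot, by itself, account for the $\Omega$ lower bound. On this diagonal the summand is $a(m)^3 / m^{\frac{k}{2}+\frac{3}{2}}$, and Deligne's bound $a(m) \ll m^{(k-1)/2+\epsilon}$ gives
\begin{equation*}
  \sum_{m \le X} \frac{a(m)^3}{m^{\frac{k}{2}+\frac{3}{2}}} \ll \sum_{m \le X} m^{k-3+\epsilon} \ll X^{k-2+\epsilon}.
\end{equation*}
Since $k - 2 + \epsilon < k - \tfrac{1}{2}$, the diagonal contribution is $o(X^{k-\frac{1}{2}})$ and is dwarfed by the $\Omega$-bound from Theorem~\ref{thm:omega}.

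Now I argue by contradiction: if only finitely many off-diagonal pairs $(h,m)$ with $h \neq m$ produced $a(h)\,a(m)\,a(2m-h) \neq 0$, then the off-diagonal contribution to the double sum would reduce to a finite sum of fixed constants, hence $O(1)$. Combined with the diagonal bound above, the entire double sum would be $O(X^{k-2+\epsilon}) = o(X^{k-\frac{1}{2}})$, contradicting Theorem~\ref{thm:omega}. Therefore infinitely many off-diagonal pairs give $a(h)\,a(m)\,a(2m-h) \neq 0$, and each such pair produces a three-term arithmetic progression $n-h', n, n+h'$ (with $h' \neq 0$) along which the triple product of Fourier coefficients is nonzero. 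Distinct pairs yield distinct ordered triples, and each unordered AP arises from at most two ordered triples, so infinitely many distinct APs are produced.

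There is no real obstacle here beyond bookkeeping: the full force of Theorem~\ref{thm:omega} already encodes the arithmetic content, and the work of the corollary is simply to translate the indexing $(h,m,2m-h)$ into the AP form $(n-h, n, n+h)$ while verifying that the trivial diagonal does not explain the $\Omega$ bound.
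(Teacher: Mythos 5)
Your proposal is correct and follows the same basic route as the paper, which simply specializes Theorem~\ref{thm:omega} to $f_1 = f_2 = f_3 = f$ and reads off the conclusion. The one place where you go beyond what the paper writes down is the treatment of the diagonal $h = m$, and this is a genuinely worthwhile addition rather than mere bookkeeping: the bare qualitative conclusion of Theorem~\ref{thm:omega} (``infinitely many terms of the sequence are nonzero'') would be satisfied even if every nonvanishing triple lay on the diagonal, since $a(m)^3 \neq 0$ for infinitely many $m$ for trivial reasons, and diagonal triples correspond to the degenerate progression with common difference zero. Your observation that the quantitative $\Omega(X^{k-\frac{1}{2}})$ bound cannot be explained by the diagonal --- because Deligne's bound caps the diagonal contribution at $O(X^{k-2+\epsilon})$ --- is exactly the right way to close this gap, and your change of variables $n = m$, $h' = m - h$ correctly converts off-diagonal pairs into nontrivial progressions $(n-h', n, n+h')$, with the finite-to-one counting at the end handling multiplicity. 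In short: same approach as the paper, but you make explicit (and correctly verify) a step the paper elides.
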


\begin{remark}
The results above may be further generalized to concern triples of modular forms
$f_i$ which do not necessarily have the same level, weight, or nebentypus. The
restrictions we impose are used to simplify the exposition of the proof;
loosening them would not significantly alter the overall argument.
\end{remark}

\section*{Motivation from the Congruent Number Problem}

Our initial motivation to understand sums of this form came from the congruent
number problem. Recall that a \emph{congruent number} is an integer which
appears as the area of a a right triangle with rational-length sides. The
congruent number problem is the classification problem of determining which
integers are congruent. It is a well-studied classical problem;
see~\cite{conrad} for a nice survey.

There is a well-known correspondence between three-term arithmetic progressions
of squares and congruent numbers, in which the common difference in the progression is a
congruent number. Let $\theta(z) = \sum_{n \in \mathbb{Z}} e(n^2 z) = \sum_{n
\geq 0} r_1(n) e(nz)$ denote the classical theta function, where $e(z) = e^{2 \pi i z}$.
Here, $r_1(n)$ is essentially twice the square-indicator function, except that
$r_1(0) = 1$. Then if $r_1(h)r_1(m)r_1(2m-h) \neq 0$, the triple $(h, m,
2m-h)$ is a three-term arithmetic progression of squares and $m-h$ is congruent.

Understanding sums of the shape
\begin{equation}\label{eq:ifweshadowshaveoffended}
  \sum r_1(h)r_1(m)r_1(2m-h)
\end{equation}
would open up new approaches to understanding the distribution of congruent
numbers. Sums of the above shape can be attained from the primary sum
\begin{equation*}
  \sum_{m,h =1}^\infty r_1(m-h)r_1(m)r_1(m+h)r_1(th)
\end{equation*}
where $t$ is a square-free integer, studied by the authors
in~\cite{hkldw_congruent}, after summing over all such $t$, although the
implicit dependence of the error term on $t$ prevents detailed analysis.

Heuristically, by replacing holomorphic cusp forms with the classical theta function
$\theta(z) = \sum e(n^2z)$, where $e(z) = e^{2\pi i z}$, the main results of this paper would
describe sums of the shape~\eqref{eq:ifweshadowshaveoffended}. Furthermore, one
would attain meromorphic continuation for the series
\begin{equation*}
  \widetilde{D}(s,w)
  =
  \sum_{m,h\geq 1}^\infty
  \frac{r_1(h)r_1(m)r_1(2m - h)}
       {m^{s - \frac{1}{2}} h^w}.
\end{equation*}
This series would provide additional tools to investigate the asymptotic
behavior of congruent numbers.

There are significant challenges to carrying out this heuristic: it's necessary
to work in higher level with additional cusps; $\theta(z)$ is half-integral
weight; and perhaps most significantly, $\theta(z)$ is not cuspidal. The authors
hope to continue this investigation in later work.

\section{Methodology and Notation}

Let $f_1(z) = \sum a(n) e(nz)$ be a holomorphic cuspidal Hecke eigenform of
weight $k$ on $\Gamma_0(N)$ with trivial nebentypus and real coefficients;
similarly, define $f_2(z)$ and $f_3(z)$ with respective coefficients $b(n)$ and
$c(n)$. We will investigate the meromorphic continuation of the shifted multiple
Dirichlet series
\begin{equation}\label{def:Dsw}
  D(s,w)
  :=
  \sum_{m,h \geq 1} \frac{a(h)b(m)c(2m-h)}{m^{s+k-1} h^w},
\end{equation}
defined initially for $\Re s, \Re w$ sufficiently large. Ultimately, we will
show that this double Dirichlet series has meromorphic continuation to
$\mathbb{C}^2$ and has polynomial growth in vertical strips (away from poles).

Let $V(z)$ denote the product $V(z) = y^k f_2(2z) \overline{f_3(z)}$. Note
that $f_2(2z)$ is a holomorphic cusp form of weight $k$ and level $2N$.
Define the level $2N$ Poincar\'e series as
\begin{equation*}
  P_h(z, s; 2N)
  :=
  \sum_{\gamma \in \Gamma_\infty \backslash \Gamma_0(2N)}
  \Im{(\gamma z)}^s e(h\gamma z)
\end{equation*}
and note that this converges locally uniformly on the upper-half plane
$\mathbb{H}$ and belongs to $L^2(\Gamma_0(2N) \backslash \mathbb{H})$.


Then the classical unfolding computation shows that the Petersson inner product
$\langle V, P_h(\cdot, \overline{s}; 2N) \rangle$ gives the double correlation
sum
\begin{align*}
  \langle V, P_h(\cdot, \overline{s}; 2N) \rangle
  &=
  \frac{\Gamma(s + k - 1)}{{(8 \pi)}^{s + k - 1}}
  \sum_{m = 1}^\infty \frac{b(m) c(2 m - h)}{m^{s + k - 1}}.
\end{align*}
We recognize $D(s, w)$ as the sum
\begin{equation}\label{eq:Dsw_inner_product}
  D(s, w)
  =
  \frac{{(8\pi)}^{s + k - 1}}{\Gamma(s + k - 1)}
  \sum_{h \geq 1}
  \frac{a(h) \langle V, P_h(\cdot, \overline{s}; 2N) \rangle}{h^w},
\end{equation}
which converges absolutely for $\Re s$ and $\Re w$ sufficiently large.
In Section~\ref{sec:spectral_expansion}, we replace $P_h$ with its spectral
expansion to obtain an alternate description of $D(s,w)$.

In Section~\ref{sec:Dsw}, we use this spectral expansion to study the
meromorphic properties of $D(s,w)$. The broad
methodology of this section is similar to classical ideas of Selberg, more
recently refined in the appendix to Sarnak~\cite{Sarnak}, the work of Blomer and
Harcos~\cite{blomer2008spectral}, and the work of Hoffstein, the first author,
and Reznikov~\cite{Jeff}. The observation that it is possible to multiply by
$a(h)h^{-w}$ and still make sense of the resulting spectral decomposition has
been observed by Hoffstein, and is used in the first author's thesis.

The main result of Section~\ref{sec:Dsw} is to show that $D(s, w)$ has
meromorphic continuation to $\mathbb{C}^2$ and to describe the nature of the
leading poles. With this description, the remainder of the paper is very
straightforward. The final section then shows that $D(s, w)$ has polynomial
growth in vertical strips and proves the main theorems.

We note that it is possible to obtain various weighted averages of triple
correlation sums from the meromorphic properties of $D(s, w)$ by adapting the
methods that yield the main results in this paper; these might be applied to
yield interesting results in the future.

\section{Spectral Expansion}\label{sec:spectral_expansion}

In this section, we use the spectral expansion of $P_h(z, s; 2N)$
to rewrite $D(s, w)$ in a way that exposes its meromorphic properties.

By Selberg's Spectral Theorem (as in~\cite[Theorem~15.5]{IK}),
the Poincar\'{e} series $P_h(z, s; 2N)$ has a spectral expansion of the form
\begin{equation}\label{eq:spectral_expansion_poincare}
  \begin{split}
    P_h(z, s; 2N) &= \sum_j \langle P_h(\cdot,s; 2N),\mu_j \rangle \mu_j(z)
    \\
    &\,\, +
    \sum_\mathfrak{a} \frac{1}{4\pi}\!\int\limits_{-\infty}^\infty \!
    \langle P_h(\cdot,s; 2N),E_\mathfrak{a}(\cdot,\tfrac{1}{2}+it; 2N)\rangle
    E_\mathfrak{a}(z,\tfrac{1}{2}+it; 2N)dt,
  \end{split}
\end{equation}
where $\mathfrak{a}$ ranges over the cusps of $\Gamma_0(2N) \backslash
\mathbb{H}$; $E_\mathfrak{a}$ denotes Eisenstein series associated to
$\mathfrak{a}$; and $\{\mu_j\}$ denotes an orthonormal basis of the residual and
cuspidal spaces, consisting of the constant form $\mu_0$ and of Hecke-Maass
forms $\mu_j$ for $L^2(\Gamma_0(2N)\backslash \mathbb{H})$ with associated
types $\frac{1}{2}+it_j$.
The inner product of the Poincar\'{e} series against the constant form $\mu_0$
vanishes, so we omit further mention of it.


Here, $E_\mathfrak{a}(z,s;2N)$ is the Eisenstein series of level $2N$ defined by
\begin{equation}
  E_\mathfrak{a}(z,s;2N)
  =
  \sum_{\gamma \in \Gamma_\mathfrak{a} \backslash \Gamma_0(2N)}
  \Im{(\sigma_{\mathfrak{a}}^{-1}\gamma z)}^s,
\end{equation}
where $\Gamma_\mathfrak{a} \subset \Gamma_0(2N)$ is the stabilizer of the cusp
$\mathfrak{a}$ and $\sigma_\mathfrak{a} \in \mathrm{PSL}_2(\mathbb{R})$
satisfies $\sigma_\mathfrak{a} \infty = \mathfrak{a}$, induces an isomorphism
$\Gamma_\mathfrak{a} \cong \Gamma_\infty$ via conjugation, and is unique up to
right translation. We refer to the sum over $j$ as the ``discrete part of the
spectrum'' and the sum of integrals of Eisenstein series as the ``continuous
part of the spectrum.''

By replacing $P_h$ with its spectral expansion in $\langle V, P_h\rangle$, we
obtain the expansion
\begin{align}\label{eq:VPh_inner_product}
  \langle V(z)&, P_h(z,\overline{s};2N) \rangle
  =
  \sum_j \overline{\langle P_h(\cdot,\overline{s};2N), \mu_j\rangle}
  \langle V,\mu_j \rangle
  \\
  &+
  \sum_{\mathfrak{a}} \frac{1}{4\pi}
  \int_{-\infty}^\infty
  \overline{\langle
    P_h(\cdot,\overline{s};2N), E_{\mathfrak{a}}(\cdot,\tfrac{1}{2}+it;2N)
  \rangle}
  \langle V, E_{\mathfrak{a}}(\cdot,\tfrac{1}{2}+it;2N) \rangle dt.
\end{align}

The Fourier expansions of the Maass forms and Eisenstein series are known and
can be used to understand the inner products against the Poincar\'{e} series.
The Maass forms have Fourier expansions of the form
\begin{equation}
  \mu_j(z)
  =
  \sqrt{y} \sum_{\lvert m\rvert \neq 0} \rho_j(m)
  K_{it_j}(2\pi \lvert m \rvert y) e(mx),
\end{equation}
where $K_{it_j}$ is a $K$-Bessel function.
For each Maass form $\mu_j$, there is a constant $\rho_j(1)$ such that for each
prime $p$ with $\gcd(p, 2N) = 1$, the coefficient $\rho_j(p)$ can be written
$\rho_j(p) = \rho_j(1) \lambda_j(p)$, where $\lambda_j(p)$ is the eigenvalue of
the $p$-th Hecke operator.
In level $1$, this common constant is the first coefficient $\rho_j(1)$.
By a minor abuse of notation, we continue to use the notation $\rho_j(1)$ even
though the $m=1$ Fourier coefficient might be zero. Thus we write $\rho_j(h) = \rho_j(1) \lambda_j(h)$.

The Eisenstein series have Fourier expansions
\begin{align*}
  E_\mathfrak{a}(z,s;2N)
  &=
  \delta_{\mathfrak{a},\infty}y^{s}
  +
  \frac{\sqrt{\pi}\Gamma(s-\frac{1}{2})\rho_{\mathfrak{a}}(s,0)y^{1-s}}
       {\Gamma(s)}
  \\
  &\quad
  + \frac{2\pi^s \sqrt{y}}{\Gamma(s)}
  \sum_{m \neq 0}
  \lvert m \rvert^{s-\frac{1}{2}}
  \rho_{\mathfrak{a}}(s,m) K_{s-\frac{1}{2}}(2\pi \lvert m \rvert y) e(mx)
\end{align*}
with computable coefficients $\rho_\mathfrak{a}(s, m)$.

With these expansions, one can explicitly compute the inner products as
\begin{align}
  \overline{\langle
    P_h(\cdot,\overline{s};2N), E_{\mathfrak{a}}(\cdot,\tfrac{1}{2}+it;2N)
  \rangle}
  &=
  \frac{\rho_{\mathfrak{a}}(\frac{1}{2}+it,h)
    \Gamma(s-\frac{1}{2}+it)
    \Gamma(s-\frac{1}{2}-it)
  }
  {4^{s-1} \pi^{s-\frac{3}{2}-it}
    h^{s-\frac{1}{2}-it}
    \Gamma(s)
    \Gamma(\frac{1}{2}+it)
  };
  \\
  \overline{\langle P_h(\cdot,\overline{s};2N), \mu_j\rangle}
  &=
  \frac{\rho_j(h) \sqrt{\pi}}
    {{(4\pi h)}^{s-\frac{1}{2}}}
  \frac{\Gamma(s-\frac{1}{2}+it_j) \Gamma(s-\frac{1}{2}-it_j)}{\Gamma(s)}.
\end{align}
This computation is another application of unfolding, in which one uses the
integral identity found in~\cite[6.621(3)]{GradshteynRyzhik07} to understand the
integrals involving $K$-Bessel functions.

Substituting these expressions into~\eqref{eq:VPh_inner_product} gives the
following spectral expansion.
\begin{lemma}[Spectral expansion]\label{lem:VPh_spectral}
  The inner product $\langle V, P_h(\cdot, \overline{s}; 2N) \rangle$ has the
  spectral expansion
\begin{equation}
\begin{split}
  &\langle V(z), P_h(z,\overline{s};2N) \rangle
  = \sum_j
  \frac{\rho_j(h)\sqrt{\pi}}{{(4\pi h)}^{s-\frac{1}{2}}}
  \frac{\Gamma(s-\frac{1}{2}+it_j)\Gamma(s-\frac{1}{2}-it_j)}{\Gamma(s)}
  \langle V,\mu_j \rangle
  \\
  &\quad +
  \sum_{\mathfrak{a}}
  \! \int_{-\infty}^\infty
  \frac{\rho_{\mathfrak{a}}(\frac{1}{2}+it,h)}
       {4^{s}(\pi h)^{s-\frac{1}{2}-it}}
  \frac{\Gamma(s-\frac{1}{2}+it)\Gamma(s-\frac{1}{2}-it)}
       {\Gamma(s)\Gamma(\frac{1}{2}+it)}
  \langle V, E_{\mathfrak{a}}(\cdot,\tfrac{1}{2}+it;2N) \rangle dt
\end{split}
\end{equation}
  for $\Re s$ sufficiently large.
  We refer to the sum indexed by $j$ as the ``discrete part'' and the sum
  indexed by the cusps $\mathfrak{a}$ as the ``continuous part'' of the spectral
  expansion.
\end{lemma}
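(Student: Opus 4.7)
The plan is to assemble the spectral expansion of $\langle V, P_h(\cdot,\overline{s};2N)\rangle$ from two ingredients: the abstract spectral decomposition \eqref{eq:VPh_inner_product}, which is already in place, and the explicit evaluation of the two matrix coefficients $\overline{\langle P_h(\cdot,\overline{s};2N),\mu_j\rangle}$ and $\overline{\langle P_h(\cdot,\overline{s};2N),E_\mathfrak{a}(\cdot,\tfrac12+it;2N)\rangle}$ recorded in the formulas preceding the lemma. Once those matrix coefficients are established, the lemma follows by substitution. Throughout, I take $\Re s$ large enough that all sums, integrals, and interchanges are absolutely convergent, which is standard and postponed to a remark.

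For the discrete matrix coefficient I would start from the defining sum of $P_h(z,s;2N)$ and unfold against the fundamental domain of $\Gamma_0(2N)\backslash\mathbb{H}$, reducing $\langle P_h(\cdot,s;2N),\mu_j\rangle$ to
\begin{equation*}
  \int_0^\infty\!\!\int_0^1 y^{s-2}\,e(hz)\,\overline{\mu_j(z)}\,dx\,dy.
\end{equation*}
Inserting the Fourier expansion $\mu_j(z)=\sqrt{y}\sum_m \rho_j(m)K_{it_j}(2\pi|m|y)e(mx)$ and integrating in $x$ collapses everything to the single term $m=h$, leaving
\begin{equation*}
  \overline{\rho_j(h)}\int_0^\infty y^{s-3/2}\,e^{-2\pi h y}\,K_{it_j}(2\pi h y)\,dy.
\end{equation*}
After rescaling $u=2\pi h y$, this integral is exactly the specialization $\alpha=\beta$ of formula 6.621(3) of Gradshteyn–Ryzhik, in which the hypergeometric factor collapses to $1$, producing the quotient of Gamma functions $\sqrt{\pi}\,\Gamma(s-\tfrac12+it_j)\Gamma(s-\tfrac12-it_j)/(2^{s-1/2}\Gamma(s))$. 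Combining the prefactors yields the stated formula for $\overline{\langle P_h(\cdot,\overline{s};2N),\mu_j\rangle}$ once one replaces $s$ by $\overline{s}$ and conjugates, using that $\rho_j(h)$ is real (or alternatively using $K_{it}=K_{-it}$ and the symmetry of the Gamma product in $it_j$).

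The Eisenstein matrix coefficient is handled identically: unfold against $\Gamma_\infty\backslash\mathbb{H}$, keep the $m=h$ term of the Fourier expansion of $E_\mathfrak{a}(z,\tfrac12+it;2N)$, and apply the same Bessel integral with $\nu=it$ in place of $it_j$. The only new bookkeeping is that the $\sqrt{y}$ in the Eisenstein Fourier expansion comes multiplied by $2\pi^{1/2+it}/\Gamma(\tfrac12+it)$ and a power $|h|^{it}$, which together explain the $4^{-s}(\pi h)^{-(s-1/2-it)}$ normalization and the additional $\Gamma(\tfrac12+it)$ in the denominator of the stated formula. With both matrix coefficients in hand, substituting into \eqref{eq:VPh_inner_product} gives the lemma. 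The only potential obstacle is purely computational: keeping track of the various factors of $\pi$, $4^s$, and $h^{1/2-s+it}$ so that the Bessel evaluation matches the advertised form; no genuine analytic difficulty arises because everything takes place in the region of absolute convergence.
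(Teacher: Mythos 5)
Your proposal is correct and follows essentially the same route as the paper: unfold the Poincar\'e series against $\mu_j$ and $E_\mathfrak{a}$, isolate the $m=h$ Fourier term, evaluate the resulting $K$-Bessel integral via Gradshteyn--Ryzhik 6.621(3), and substitute the two matrix coefficients into \eqref{eq:VPh_inner_product}. The bookkeeping of the powers of $2$, $\pi$, and $h$ in your sketch matches the formulas stated before the lemma, so no further comment is needed.
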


\section{The double sum $D(s, w)$}\label{sec:Dsw}

To study the meromorphic continuation for $D(s,w)$, the double Dirichlet
series given by
\begin{equation}
  \sum_{m,h\geq 1}^\infty \frac{a(h)b(m)c(2m-h)}{m^{s+k-1}h^w}
  =
  \frac{{(8 \pi)}^{s+k-1}}{\Gamma(s+k-1)}
  \sum_{h \geq 1}
  \frac{a(h) \langle V,P_h(\cdot,\overline{s};2N)\rangle}{h^w},
\end{equation}
we substitute the inner products with the spectral expansion obtained in
Lemma~\ref{lem:VPh_spectral}.
We split our analysis into two parts based on the natural subdivision of
$\langle V, P_h \rangle$ into discrete and continuous spectral terms.
We also discuss the convergence of each part in turn.

\subsection{Discrete Spectrum}

The discrete component of $D(s, w)$ is obtained from the discrete part of the
spectral expansion in Lemma~\ref{lem:VPh_spectral} upon multiplying by
$a(h)(8\pi)^{s+k-1}/\big(h^{w}\Gamma(s+k-1)\big)$ and summing over $h$.
After simplification, the discrete component is
\begin{equation}\label{eq:discrete_ref}
  2^{s-2}(8 \pi)^k
  \sum_j
  \frac{\Gamma(s - \frac{1}{2} + it_j)\Gamma(s - \frac{1}{2} - it_j)}
       {\Gamma(s)\Gamma(s+k-1)}
  \langle V, \mu_j \rangle \rho_j(1)
  \sum_{h \geq 1} \frac{a(h) \lambda_j(h)}{h^{s + w - \frac{1}{2}}}.
\end{equation}
In simplifying this expression, we have exchanged the order of summation; this
needs justification.

It is clear that the $h$-sum converges absolutely for $\Re (s+w)$ sufficiently
large. The behavior of $\langle V, \mu_j \rangle \rho_j(1)$ is of polynomial
growth in $\lvert t_j \rvert$ on average. In particular,
Reznikov's appendix to~\cite{Jeff} prove the following lemma.
\begin{lemma}{(Reznikov's appendix to~\cite{Jeff})}\label{lem:jeff_bound}
  Suppose $f$ and $g$ are two weight $k$ cuspidal modular forms on the
  congruence subgroup $\Gamma_0(N)$. Then for any $\epsilon > 0$,
  \begin{equation}
    \sum_{\lvert t_j \rvert \sim T}
    \rho_j(1) \langle f \overline{g} \Im{(\cdot)}^k, \mu_j \rangle
    \ll
    T^{k + 1 + \epsilon}.
  \end{equation}
\end{lemma}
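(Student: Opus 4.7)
The plan is to apply Cauchy--Schwarz to split the spectral sum, then bound each factor using the spectral large sieve on one side and the Watson--Ichino triple product formula combined with a spectral second moment estimate on the other. Let $V := f\overline{g}y^k$, a smooth $L^2$ function on $\Gamma_0(N)\backslash\mathbb{H}$ with rapid decay at the cusps thanks to the cuspidality of $f$ and $g$. Weyl's law provides $\#\{j : |t_j|\sim T\}\asymp T^2$.

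First, apply Cauchy--Schwarz, reweighting by $\cosh(\pi t_j)$ to align with the Kuznetsov formula:
\begin{equation*}
\Bigl|\sum_{|t_j|\sim T}\rho_j(1)\langle V,\mu_j\rangle\Bigr|^2 \;\leq\; S_1\cdot S_2,
\end{equation*}
where $S_1 := \sum_{|t_j|\sim T}|\rho_j(1)|^2/\cosh(\pi t_j)$ and $S_2 := \sum_{|t_j|\sim T}\cosh(\pi t_j)|\langle V,\mu_j\rangle|^2$. The spectral large sieve derived from the Kuznetsov trace formula (cf.\ \cite[Theorem~16.5]{IK}) gives $S_1\ll T^{2+\epsilon}$, after inserting a smooth bump supported on $|t|\sim T$ and extracting the diagonal $m=n=1$ contribution.

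For $S_2$, the Watson--Ichino triple product formula expresses $|\langle V,\mu_j\rangle|^2$ as the central value $L(1/2,f\times g\times\mu_j)$ of the degree-eight triple product $L$-function, divided by the adjoint $L$-values $L(1,\operatorname{sym}^2 f)$, $L(1,\operatorname{sym}^2 g)$, $L(1,\operatorname{sym}^2\mu_j)$, times an explicit archimedean factor. Stirling applied to the local Langlands parameters of the three factors shows this archimedean factor is $\asymp t_j^{2k-2}e^{-\pi t_j}$, so the $\cosh(\pi t_j)$ weight cancels up to a polynomial $t_j^{2k-2}$. It then suffices to establish the on-average Lindel\"of estimate
\begin{equation*}
\sum_{|t_j|\sim T}\frac{L(1/2,f\times g\times\mu_j)}{L(1,\operatorname{sym}^2\mu_j)}\ll T^{2+\epsilon},
\end{equation*}
which follows from an approximate functional equation for the triple $L$-function combined with the spectral large sieve applied to the resulting bilinear form in Hecke eigenvalues $\lambda_j(m)\lambda_j(n)$. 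Combining the two factor bounds yields $|S_1 S_2|^{1/2}\ll T^{k+1+\epsilon}$, as desired.

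The main obstacle is the on-average Lindel\"of estimate: pointwise convexity gives only $L(1/2,f\times g\times\mu_j)\ll t_j^{1+\epsilon}$, which summed over the $\asymp T^2$ terms yields $T^{3+\epsilon}$ --- a factor of $T$ too large --- so one must genuinely exploit averaging across the spectral family rather than estimating term by term. The archimedean bookkeeping in Watson's formula (verifying the $t_j^{2k-2}$ exponent and the cancellation of the $e^{\pm\pi t_j}$ factors against the $\cosh(\pi t_j)$ weight) is routine but demands some care, as does the verification that the continuous spectrum contribution in the Kuznetsov formula obeys the same bound.
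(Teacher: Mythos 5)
The paper does not prove this lemma; it quotes it from Reznikov's appendix to~\cite{Jeff}, where the key input is a representation-theoretic bound of the form $\sum_{|t_j|\le T} e^{\pi |t_j|}\,\lvert\langle V,\mu_j\rangle\rvert^2 \ll T^{2k+\epsilon}$, obtained by analytic continuation of the relevant representation (in the style of Sarnak and Bernstein--Reznikov), with no triple product $L$-functions appearing. Your Cauchy--Schwarz reduction is sound, your bound $S_1\ll T^{2+\epsilon}$ is standard, and your archimedean bookkeeping in Watson's formula (the factor $t_j^{2k-2}e^{-\pi t_j}$, hence the target $S_2\ll T^{2k+\epsilon}$) is consistent with the exponent $T^{k+1+\epsilon}$ in the lemma. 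So the reduction to the first-moment estimate $\sum_{|t_j|\sim T} L(\tfrac12, f\times g\times\mu_j)/L(1,\operatorname{sym}^2\mu_j)\ll T^{2+\epsilon}$ is a legitimate alternative route in principle.

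The gap is that this first-moment estimate does not follow from ``an approximate functional equation combined with the spectral large sieve,'' and that step is where the entire difficulty of the lemma lives. The triple product $L$-function here has degree $8$ with archimedean parameters $\{\pm(k-1)\pm it_j,\ \pm it_j\ (\text{twice})\}$, so its analytic conductor is $\asymp t_j^{8}$ (in particular, pointwise convexity gives $t_j^{2+\epsilon}$, not $t_j^{1+\epsilon}$ as you state, so termwise summation loses $T^2$, not $T$). The approximate functional equation therefore has length $\asymp T^{4}$, while the family has only $\asymp T^{2}$ members; the spectral large sieve $\sum_j \omega_j \bigl\lvert \sum_{n\le N} a_n\lambda_j(n)\bigr\rvert^2 \ll (T^2+N)\sum_n \lvert a_n\rvert^2$ with $N\asymp T^4$ yields a second moment $\ll T^{4+\epsilon}$, and hence (using positivity of the central values and Cauchy--Schwarz against $\sum_j\omega_j\ll T^2$) only a first moment $\ll T^{3+\epsilon}$ --- a full power of $T$ short of what you need. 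Closing that gap requires genuine off-diagonal work (Kuznetsov plus Voronoi or shifted-convolution analysis of the terms with $T^{2}\ll n\ll T^{4}$); in the literature this first moment is more often \emph{deduced} from exponentially-weighted bounds on $\sum_j e^{\pi t_j}\lvert\langle V,\mu_j\rangle\rvert^2$ via Watson's formula, i.e.\ from the very statement you are trying to prove, so as written your argument is either incomplete or circular. A secondary issue: Watson--Ichino applies to Hecke newforms, whereas $f$, $g$ are arbitrary cusp forms and the orthonormal basis $\{\mu_j\}$ on $\Gamma_0(2N)$ contains oldforms; this is repairable by decomposing into newforms, but it needs to be said.
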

For any $s$ away from poles, Stirling's approximation shows that the gamma
functions give exponential decay in $\lvert t_j \rvert$, and thus the sum over
$j$ converges locally normally. Thus the double sum converges absolutely, and
the sums can be reordered.

\subsection{The $h$ sum}
We can recognize the $h$-sum,
\begin{equation}
  \rho_j(1) \sum_{h \geq 1} \frac{a(h) \lambda_j(h)}{h^s},
\end{equation}
as a Rankin--Selberg convolution 
$L$-function which is obtained by unfolding the inner product of the form
$\langle \mu_j \Im{(\cdot)}^{k/2} \overline{f_1}, E \rangle$, where $E$ is an
appropriately chosen Eisenstein series.

For this application, we use the weight $k$ Eisenstein series
\begin{equation}
  E_\infty^k(z,w; 2N)
  :=
  \sum_{\gamma \in \Gamma_\infty \backslash \Gamma_0(2N)}
  \Im{(\gamma z)}^w J{(\gamma, z)}^{-k},
\end{equation}
where $J(\gamma, z) = j(\gamma, z) / \lvert j(\gamma, z) \rvert$ and
$j(\gamma, z) = (cz+d)$ is our automorphic multiplier.
Then another unfolding computation shows that
\begin{equation}
\begin{split}
  &\langle
    \mu_j \Im{(\cdot)}^{k/2} \overline{f_1}, E_\infty^k (\cdot, \overline{s}; 2N)
  \rangle
  \\
  &\quad =
  \frac{\sqrt \pi}{{(4\pi)}^{s + \frac{k}{2} - \frac{1}{2}}}
  \frac{\Gamma(s + \frac{k}{2} - \frac{1}{2} - it_j)
        \Gamma(s + \frac{k}{2} - \frac{1}{2} + it_j)}
       {\Gamma(s + \frac{k}{2})}
  \sum_{h \geq 1}
  \frac{a(h) \rho_j(h)}{h^{s + \frac{k}{2} - \frac{1}{2}}}.
\end{split}
\end{equation}
Let $\zeta^{(2N)}(s)$ denote the completed zeta function with the Euler factors
corresponding to divisors of $2N$ omitted. Then the completed Eisenstein series
$\zeta^{(2N)}(2s)E_\infty^k(z, s; 2N)$ has a functional equation of the shape $s
\mapsto 1-s$ and poles at most at $s = 1$ and $0$. We therefore define the
Rankin-Selberg convolution $L$-function
$L(s, \mu_j \otimes \overline{f_1})$ as
\begin{equation}
  L(s, \mu_j \otimes \overline{f_1})
  =
  \zeta^{(2N)}(2s)
  \sum_{h \geq 1}
  \frac{a(h) \rho_j(h)}{h^{s + \frac{k}{2} - \frac{1}{2}}}.
\end{equation}
This $L$-function can be completed and satisfies the functional equation
\begin{align}
  \Lambda(s, \mu_j \otimes \overline{f_1})
  &:=
  L(s, \mu_j \otimes \overline{f_1})
  \frac{\Gamma(s)
        \Gamma(s + \frac{k}{2} - \frac{1}{2} - it_j)
        \Gamma(s + \frac{k}{2} - \frac{1}{2} + it_j)}
       {{4}^{s + \frac{k}{2} - 1} \pi^{2s+\frac{k}{2}-1} \;
        \Gamma(s + \frac{k}{2})}
  \\
  &= \Lambda(1-s, {\mu_j} \otimes {\overline{f_1}}).
  \label{equation:discrete_functional_equation}
\end{align}
Further, the meromorphic behavior of the Eisenstein series guarantees that
the completed $L$-function has poles at most at $s = 0$ and $s = 1$.

In this application, we can rewrite the $h$ sum as
\begin{equation}
  \sum_{h \geq 1} \frac{a(h) \rho_j(h)}{h^{s+w-\frac{1}{2}}}
  =
  \frac{L(s + w - \frac{k}{2}, \mu_j \otimes \overline{f_1})}
       {\zeta^{(2N)}(2s + 2w - k)}.
\end{equation}
As a result, we have the following lemma.

\begin{lemma}\label{lem:disc}
The discrete component of $D(s, w)$ in~\eqref{eq:discrete_ref} can be rewritten
as
\begin{equation}
\begin{split}
 \label{eq:discrete_spectrum2}
  \frac{{(8 \pi)}^{s + k - 1}}{\Gamma(s+k-1)}
  \sum_j &\frac{\sqrt \pi}{{(4\pi)}^{s - \frac{1}{2}}}
  \frac{\Gamma(s - \frac{1}{2} + it_j)\Gamma(s - \frac{1}{2} - it_j)}{\Gamma(s)}
  \\
  &\qquad  \times \langle V, \mu_j \rangle
  \frac{%
    L(s + w - \frac{k}{2}, \mu_j \otimes \overline{f_1})
  }{\zeta^{(2N)}(2s + 2w - k)}.
\end{split}
\end{equation}
Furthermore, the discrete component has meromorphic continuation to $\mathbb{C}^2$.
\end{lemma}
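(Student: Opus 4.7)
The plan is to execute two steps. First, I would rewrite the inner $h$-sum in~\eqref{eq:discrete_ref} by substituting $\rho_j(h) = \rho_j(1)\lambda_j(h)$, so that
\begin{equation*}
  \rho_j(1)\sum_{h\geq 1}\frac{a(h)\lambda_j(h)}{h^{s+w-\frac{1}{2}}}
  = \sum_{h\geq 1}\frac{a(h)\rho_j(h)}{h^{s+w-\frac{1}{2}}},
\end{equation*}
and then recognise this through the defining expression of the Rankin--Selberg $L$-function by the change of variable $s' = s+w-\tfrac{k}{2}$; the sum equals $L(s+w-\tfrac{k}{2}, \mu_j \otimes \overline{f_1})/\zeta^{(2N)}(2s+2w-k)$. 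A direct computation confirms the constant identity
\begin{equation*}
  2^{s-2}(8\pi)^k = \frac{(8\pi)^{s+k-1}\sqrt{\pi}}{(4\pi)^{s-\frac{1}{2}}},
\end{equation*}
which together with the preceding substitution yields the form~\eqref{eq:discrete_spectrum2}.

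For the meromorphic continuation, I would note that for each $j$ the Rankin--Selberg factor $L(\cdot, \mu_j \otimes \overline{f_1})$ is meromorphic on $\mathbb{C}$ with poles only at $0$ and $1$, while $1/\zeta^{(2N)}(2s+2w-k)$ and the gamma ratio $\Gamma(s-\tfrac{1}{2}+it_j)\Gamma(s-\tfrac{1}{2}-it_j)/(\Gamma(s)\Gamma(s+k-1))$ are meromorphic on $\mathbb{C}^2$ with explicit poles. Each summand indexed by $j$ is therefore individually meromorphic on $\mathbb{C}^2$, and the task reduces to showing that the spectral sum converges locally uniformly away from the combined pole set. Given a compact $K \subset \mathbb{C}^2$ disjoint from that set, only finitely many $j$ can place a gamma-ratio pole into $K$, since those poles occur at $s = \tfrac{1}{2} \mp it_j - n$ for $n \geq 0$ and $|t_j| \to \infty$. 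After isolating these finitely many meromorphic terms, I would apply Stirling to obtain exponential decay of type $e^{-\pi|t_j|}|t_j|^A$ from the gamma ratio, use the functional equation~\eqref{equation:discrete_functional_equation} together with Phragm\'en--Lindel\"of to produce polynomial growth in $|t_j|$ for $L(s+w-\tfrac{k}{2}, \mu_j \otimes \overline{f_1})$, and invoke Lemma~\ref{lem:jeff_bound} to bound the dyadic spectral averages of $\rho_j(1)\langle V, \mu_j\rangle$ by $T^{k+1+\epsilon}$. The exponential decay dominates, so the tail converges locally normally and defines a holomorphic function on $K$ by Weierstrass's theorem; restoring the finitely many excluded meromorphic terms then yields meromorphic continuation on $K$, and as $K$ was arbitrary, on all of $\mathbb{C}^2$.

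The main obstacle will be securing a convexity bound on $L(s+w-\tfrac{k}{2}, \mu_j \otimes \overline{f_1})$ with explicit polynomial dependence on $|t_j|$, uniformly as $(s,w)$ ranges over compact subsets passing through the critical region. Outside the critical strip the Dirichlet series converges absolutely on one side and the functional equation handles the other, but inside it one must track the spectral parameter carefully through the Phragm\'en--Lindel\"of interpolation and similarly keep incidental zeros of $\zeta^{(2N)}(2s+2w-k)$ under control. Once the polynomial growth in $|t_j|$ is in hand, the exponential decay from the gamma factors easily dominates, and the remainder of the argument is routine bookkeeping of poles.
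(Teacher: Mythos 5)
Your proposal is correct and follows essentially the same route as the paper: substitute $\rho_j(h)=\rho_j(1)\lambda_j(h)$, identify the $h$-sum as $L(s+w-\tfrac{k}{2},\mu_j\otimes\overline{f_1})/\zeta^{(2N)}(2s+2w-k)$ via the Rankin--Selberg unfolding, verify the constant, and deduce continuation from the termwise meromorphy together with locally normal convergence coming from the exponential decay of the gamma factors against the averaged bound of Lemma~\ref{lem:jeff_bound}. The only quibble is a harmless bookkeeping slip: since $L=\rho_j(1)\widetilde L$, you should apply the convexity/Phragm\'en--Lindel\"of bound to $\widetilde L$ rather than to $L$ when you also invoke Lemma~\ref{lem:jeff_bound} for $\rho_j(1)\langle V,\mu_j\rangle$, to avoid counting $\rho_j(1)$ twice; the exponential decay absorbs this in any case.
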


We note that for $s$ and $w$ in any compact set away from poles, the gamma
functions give exponential decay in $\lvert t_j \rvert$, giving locally normal
convergence. There are potential poles at $2s + 2w - k = \rho$ where $\rho$ is a
zero of the zeta function. There are also potential poles when $s - \frac{1}{2}
\pm it_j = -n$ for $n \in \mathbb{Z}_{\geq 0}$ from the gamma functions. In
light of the Selberg Eigenvalue Conjecture, we expect that the leading poles
of the latter type in $s$ form an infinite family with the same real part;
therefore later analysis will not give a consistent asymptotic leading term
coming from the discrete spectrum.

\begin{remark}
  When $w$ is a nonpositive integer, the poles from
  $s - \frac{1}{2} \pm it_j = -n$ do not occur. This can be observed by
  rewriting the discrete spectrum in terms of
  $\Lambda(\cdot, \mu_j \otimes \overline{f_1})$.
  This reveals pairs of gamma factors of the shape
  $\Gamma(s - \frac{1}{2} \pm it_j) / \Gamma(s + w - \frac{1}{2} \pm it_j)$,
  indicating the cancellation.
\end{remark}

\subsection{Continuous Spectrum}

The continuous component of $D(s, w)$ is obtained from the continuous part of
the spectral expansion in Lemma~\ref{lem:VPh_spectral} after multiplying by
$a(h){(8\pi)}^{s+k-1}/\big(h^{w}\Gamma(s+k-1)\big)$ and summing over $h$.
After simplification, the continuous component is
\begin{equation}
\begin{split}\label{eq:continuous_sum_ref}
  \frac{2^{s + 3k - 3} \pi^{k - \frac{1}{2}}} {\Gamma(s + k - 1)}
  \sum_{\mathfrak{a}} &\int_{-\infty}^\infty
  \pi^{it}
  \frac{\Gamma(s - \frac{1}{2} + it) \Gamma(s - \frac{1}{2} - it)}
       {\Gamma(s) \Gamma(\frac{1}{2} + it)}
  \left\langle
    V, E_\mathfrak{a}(\cdot, \tfrac{1}{2} + it; 2N)
  \right\rangle
  \\
  &\times
  \sum_{h \geq 1}
  \frac{a(h) \rho_{\mathfrak{a}}(\frac{1}{2} + it, h)}
       {h^{w + s - \frac{1}{2} - it}}
  \, dt.
\end{split}
\end{equation}
Classical bounds on $a(h)$ and Lemma~3.4 of~\cite{Blomer} imply that
$a(h)\rho_\mathfrak{a}(1/2 + it, h)$ has at most mild polynomial growth in $t$ and $h$.
The growth in $t$ is analogous to the grown in $t_j$ in the discrete spectrum.
The following lemma follows from Stirling's formula and the same result in
Reznikov's appendix to~\cite{Jeff} that Lemma~\ref{lem:jeff_bound} is derived
from.

\begin{lemma}{(Reznikov's Appendix to~\cite{Jeff})}\label{lem:jeff_bound_cont}
  With the notation above, we have the bound
  \begin{equation}
    \int_{-T}^T \frac{%
      \lvert \langle V, E_\mathfrak{a}(\cdot, \frac{1}{2} + it; 2N) \rangle \rvert
    }{%
      \lvert \Gamma(\frac{1}{2} + it) \rvert
    }
    dt \ll T^{1 + k + \epsilon}.
  \end{equation}
\end{lemma}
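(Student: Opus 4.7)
The plan is to unfold the inner product $\langle V, E_\mathfrak{a}(\cdot,\tfrac{1}{2}+it;2N)\rangle$ into gamma factors times a Rankin--Selberg L-value, then average over $t$ using the same Parseval-type bound from Reznikov's appendix to \cite{Jeff} that underlies Lemma \ref{lem:jeff_bound}.

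First I would unfold the Eisenstein series. For $\mathfrak{a}=\infty$, only the zeroth $x$-Fourier coefficient of $V(z) = y^k f_2(2z)\overline{f_3(z)}$, namely $\sum_m b(m)c(2m)e^{-8\pi my}$, survives the fundamental domain integration, and the remaining $y$-integral is a standard gamma integral. This identifies the inner product with
$$\frac{\Gamma(k-\tfrac{1}{2}-it)}{(8\pi)^{k-\tfrac{1}{2}-it}} \sum_{m\geq 1} \frac{b(m)c(2m)}{m^{k-\tfrac{1}{2}-it}},$$
which is a Rankin--Selberg-type Dirichlet series, related via Hecke multiplicativity to $L(\tfrac{1}{2}-it, f_2\otimes\overline{f_3})$ and subject to the same convexity estimates. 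The computation at other cusps $\mathfrak{a}$ is analogous, with the Fourier coefficients $\rho_\mathfrak{a}(\cdot,\cdot)$ replacing those at $\infty$.

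Next, Stirling's approximation gives $|\Gamma(k-\tfrac{1}{2}-it)| \asymp |t|^{k-1}e^{-\pi|t|/2}$ and $|\Gamma(\tfrac{1}{2}+it)|^{-1} \asymp e^{\pi|t|/2}$ for large $|t|$, so the exponential factors cancel and the integrand is bounded pointwise by $|t|^{k-1+\epsilon}$ times the modulus of the Rankin--Selberg L-value on the critical line. The claim then reduces to a mean-value estimate for this L-value.

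For the final estimate I would invoke the Parseval-type inequality from Reznikov's appendix, which simultaneously controls $V$ against the full Plancherel decomposition of $L^2(\Gamma_0(2N)\backslash\mathbb{H})$ and thus treats the discrete spectrum of Lemma \ref{lem:jeff_bound} and the continuous integral here on equal footing. Extracting the continuous contribution and applying Cauchy--Schwarz over the interval of length $2T$ produces the stated bound $T^{1+k+\epsilon}$. The main obstacle is matching the exponents carefully: the Plancherel density on the discrete side grows linearly in $T$ while the continuous side has constant density, and the exponential cancellation from Stirling is precisely what rebalances the two estimates. Extending Reznikov's argument uniformly in the cusp $\mathfrak{a}$ at level $2N$ requires some bookkeeping but does not alter the essential structure.
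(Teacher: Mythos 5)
Your argument is essentially correct, but it is worth noting that the paper does not prove this lemma at all: it simply cites Stirling's formula together with the spectral mean--value (Bessel/Parseval--type) inequality in Reznikov's appendix to~\cite{Jeff}, i.e.\ exactly the ingredient you invoke in your final paragraph, where Cauchy--Schwarz over $[-T,T]$ against the bound $\int_{-T}^{T} e^{\pi\lvert t\rvert}\lvert\langle V,E_\mathfrak{a}\rangle\rvert^{2}\,dt \ll T^{2k+1+\epsilon}$ recovers $T^{k+1+\epsilon}$ after the $e^{\pi\lvert t\rvert/2}$ from $1/\lvert\Gamma(\tfrac12+it)\rvert$ is absorbed. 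Your first two paragraphs, however, constitute a genuinely different and self-contained route: unfolding identifies $\langle V,E_\infty(\cdot,\tfrac12+it;2N)\rangle$ with $\Gamma(k-\tfrac12-it)(8\pi)^{-(k-\frac12-it)}\sum_m b(m)c(2m)m^{-(k-\frac12-it)}$, Stirling cancels the exponentials leaving $\lvert t\rvert^{k-1}$ times a degree-four Rankin--Selberg value on its critical line, and the convexity bound $\ll(1+\lvert t\rvert)^{1+\epsilon}$ already gives $\int_{-T}^{T}\lvert t\rvert^{k-1}(1+\lvert t\rvert)^{1+\epsilon}\,dt\ll T^{k+1+\epsilon}$ with no appeal to Reznikov whatsoever. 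The trade-off is that the convexity route requires the functional equation of the vector of these Dirichlet series over all cusps (via the scattering matrix of $E_\mathfrak{a}$ at level $2N$), whereas the Parseval route is softer and needs no information about the $h$-aspect arithmetic. Two small corrections: at a cusp $\mathfrak{a}\neq\infty$ the unfolded Dirichlet series is built from the Fourier coefficients of $f_2(2\cdot)$ and $f_3$ in their expansions at $\mathfrak{a}$, not from the Eisenstein coefficients $\rho_\mathfrak{a}(\cdot,\cdot)$ as you wrote; and once you have reduced to an $L$-value on the critical line you should finish with convexity or a moment bound for that $L$-function rather than circling back to the spectral Plancherel inequality, which bounds the inner products directly and makes the unfolding unnecessary.
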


It follows from Stirling's formula and Proposition~4.1 of~\cite{Jeff} that the
integral has exponential decay in $t$. Thus for $\Re (s + w)$ sufficiently
large, this converges absolutely.

As with the discrete spectrum, we will recognize the sum over $h$ as a
Rankin-Selberg convolution and use this convolution to produce a meromorphic
continuation of the continuous component of $D(s, w)$.

Explicit computation shows that
\begin{equation}\label{eq:cont_ds_is_RS}
\begin{split}
  &\big\langle
    E_\mathfrak{a}(\cdot, \tfrac{1}{2} + it; 2N) {\Im(\cdot)}^\frac{k}{2}
    \overline{f_1(\cdot)}
    ,
    E_\infty^k(\cdot, \overline{s})
  \big\rangle
  \\
  &\quad=
  \frac{2 \pi^{\frac{1}{2} + it}} {\Gamma(\frac{1}{2} + it)}
  \sum_{h \geq 1}
  \frac{a(h) \rho_{\mathfrak{a}}(\frac{1}{2} + it, h)}
       {h^{-it}}
  \int_0^\infty
  y^{s + \frac{k}{2} - \frac{1}{2}}
  K_{it}(2\pi h y) e^{-2\pi h y} \frac{dy}{y}
  \\
  &\quad=
  \frac{2 \pi^{1 + it}} {{(4 \pi)}^{s + \frac{k}{2} - \frac{1}{2}}}
  \frac{\Gamma(s + \frac{k}{2} - \frac{1}{2} + it)
        \Gamma(s+\frac{k}{2} - \frac{1}{2} - it)}
       {\Gamma(\frac{1}{2}+ it)
        \Gamma(s+\frac{k}{2})}
  \sum_{h \geq 1}
  \frac{a(h) \rho_{\mathfrak{a}}(\frac{1}{2}+it,h)}
       {h^{s-it+\frac{k}{2}-\frac{1}{2}}}.
\end{split}
\end{equation}
It follows that the continuous spectrum can be written as
\begin{equation}\label{eq:continuous_ref}
\begin{split}
  &\frac{2^{3s + 2w + 3k - 5} \pi^{s + w + k - 2}}{i \Gamma(s + k - 1)}
  \sum_{\mathfrak{a}}
  \int_{-i\infty}^{i\infty}
  \frac{\Gamma(s + w)
        \Gamma(s - \frac{1}{2} + z)
        \Gamma(s - \frac{1}{2} - z)}
       {\Gamma(s)
        \Gamma(s + w - \frac{1}{2} + z)
        \Gamma(s + w - \frac{1}{2} - z)}
  \\
  &\quad \times
  \big\langle
     E_\mathfrak{a}(\cdot,\tfrac{1}{2}+ z; 2N)\Im{(\cdot)}^\frac{k}{2}
    \overline{f_1(\cdot)}
    ,
    E_\infty^k(\cdot,\overline{s + w - \tfrac{k}{2}})
  \big\rangle
  \big\langle
    V, E_\mathfrak{a}(\cdot, \tfrac{1}{2} - \overline{z}; 2N)
  \big\rangle dz.
\end{split}
\end{equation}

Each part in this expression for the continuous spectrum has a clear
meromorphic continuation, but the integral entangles poles in $s$ with
those of $z$.
Considering the poles of the gamma functions and Eisenstein series, it is
immediately clear that the continuous component has meromorphic continuation to
the region defined by $\Re(s + w) - \frac{k}{2} > \frac{1}{2}$ and
$\Re s > \frac{1}{2}$.
This is sufficient to prove the primary theorems in the next section.
But we also explore how to delicately and iteratively extend the meromorphic
continuation of the continuous spectrum by carefully shifting the line of
integration and collecting residual terms.

Initially take $\Re w$ large.
For small $\epsilon > 0$, let $\Re s$ be in the interval $(\frac{1}{2},
\frac{1}{2} + \epsilon)$.
Shift the $z$-contour of integration to the right along a contour $C$ which
bends to remain in the zero-free region of $\zeta(1 - 2z)$, avoiding the
potential poles from these zeroes in
$E_\mathfrak{a}(\cdot, \frac{1}{2} - \overline{z}; 2N)$.
Taking $\epsilon$ sufficiently small, this shift of contour passes a pole at
$z = s - \frac{1}{2}$ with residual term
\begin{equation}
\begin{split}
  \mathcal{R}^-
  &=
  \frac{2^{3s + 2w + 3k - 4} \pi^{s + w + k - 1}}
       {\Gamma(s + k - 1) \cdot 2\pi i}
  \sum_{\mathfrak{a}}
  \frac{\Gamma(s + w)\Gamma(2s - 1)}
       {\Gamma(s)\Gamma(2s + w - 1)\Gamma(w)}
  \\
  &\quad \times
  \big\langle  E_\mathfrak{a}(\cdot,s; 2N)\Im{(\cdot)}^\frac{k}{2}
    \overline{f_1(\cdot)}
    ,
    E_\infty^k(\cdot,\overline{s + w - \tfrac{k}{2}})
  \big\rangle
  \left\langle V, E_\mathfrak{a}(\cdot, 1 - \overline{s}; 2N)\right\rangle.
\end{split}
\end{equation}
Note that the residual term $\mathcal{R}^-$ has clear meromorphic continuation
to $\mathbb{C}^2$ and is analytic in the region defined by
$\Re(s + w) - \frac{k}{2} > \frac{1}{2}$ and $\Re s > 0$, except for a potential
pole at $s = \frac{1}{2}$ from $\Gamma(2s - 1)$.


The deformation of the contour integral in~\eqref{eq:continuous_ref} along the
contour $C$ is analytic for $\Re(s)$ to the right of the contour
$\frac{1}{2} - C$ and to the left of the line $\frac{1}{2} + \epsilon$.
Examining a value of $s$ with real part left of the line $\frac{1}{2}$ but still
within the region to the right of the contour $\frac{1}{2} - C$, we can deform
the contour back to the line $\Re z = 0$.
This passes the pole at $z = \frac{1}{2} - s$ from the other gamma
function, giving the residual term
\begin{equation}
\begin{split}
  \mathcal{R}^+
  &=
  \frac{2^{3s + 2w + 3k - 4}\pi^{s + w + k - 1}}
       {\Gamma(s + k - 1) \cdot 2\pi i}
  \sum_{\mathfrak{a}}
  \frac{\Gamma(s + w)\Gamma(2s - 1)}
       {\Gamma(s)\Gamma(w)\Gamma(2s + w - 1)}
  \\
  &\quad \times
  \big\langle
     E_\mathfrak{a}(\cdot,1-s; 2N)\Im{(\cdot)}^\frac{k}{2}
    \overline{f_1(\cdot)}
    ,
    E_\infty^k(\cdot,\overline{s + w - \tfrac{k}{2}})
  \big\rangle
  \left\langle V, E_\mathfrak{a}(\cdot, \overline{s}; 2N)\right\rangle.
\end{split}
\end{equation}


For $\Re w$ sufficiently large, the now un-deformed contour integral
in~\eqref{eq:continuous_ref} is analytic for $s$ with $-\frac{1}{2} < \Re s <
\frac{1}{2}$.
Thus the continuous spectrum has meromorphic continuation to the region
$\Re(s + w) - \frac{k}{2} > \frac{1}{2}$ and $\Re s > -\frac{1}{2}$, and the
only poles in this region occur in the two residual terms $\mathcal{R}^-$ and
$\mathcal{R}^-$.

As in~\cite[\S4, p. 481--483]{Jeff} or~\cite[\S4]{hkldw1}, it is possible to
iterate this argument: for each pair of conflated poles in $s$ and $z$, one can
shift and unshift the contour of integration to extend the region of meromorphy
at the cost of introducing additional residual terms.
Each residual term has clear meromorphic continuation to $\mathbb{C}^2$, and
thus so does the continuous component.

\begin{remark}
The authors have employed this iterative technique of disambiguating  poles in
appearing in the continuous spectrum several times in the past after
specializing to the case where $w = 0$.
In those cases, the pair of residual terms $\mathcal{R}^-$ and
$\mathcal{R}^+$ are anti-symmetric.
\end{remark}

\subsection{Polar behavior of $D(s,w)$}

Having described the meromorphic continuation of the discrete and continuous
parts of $D(s, w)$, we now summarize the polar behavior of $D(s, w)$ necessary
for the proof of the theorems in the next section.

\begin{theorem}\label{thm:D(s,w)_poles}
  The multiple Dirichlet series $D(s, w)$ has meromorphic continuation to
  $\mathbb{C}^2$.
  For $\Re s  + \Re w > \frac{k}{2}$ and $\Re s > 0$, $D(s,w)$ has potential
  poles at
  \begin{enumerate}
    \item $s=\frac{1}{2} \pm it_j - r$, where $r$ is a nonnegative integer,
      arising from $\Gamma(s - \frac{1}{2} \pm it_j)$
      in~\eqref{eq:discrete_spectrum2}
    \item $2s + 2w - k = \rho$, where $\rho$ is a zero of $\zeta^{(2N)}(s)$,
      arising from $\zeta^{(2N)}(2s + 2w - k)$ in~\eqref{eq:discrete_spectrum2}
      or $E_\infty^k(\cdot, s + w - \frac{k}{2})$ in~\eqref{eq:continuous_ref}
    \item $s = \frac{1}{2}$
      arising from $\Gamma(2s - 1)$ in the residual terms $\mathcal{R}^-$ and
      $\mathcal{R}^+$
  \end{enumerate}
\end{theorem}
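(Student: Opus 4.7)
The plan is to assemble Theorem~\ref{thm:D(s,w)_poles} directly from the two analyses already completed: the discrete component handled in Lemma~\ref{lem:disc} and equation~\eqref{eq:discrete_spectrum2}, and the continuous component handled in~\eqref{eq:continuous_ref} together with the residual terms $\mathcal{R}^-$ and $\mathcal{R}^+$ obtained by shifting and unshifting the $z$-contour. Since $D(s,w)$ is the sum of these two pieces (up to the common prefactor $(8\pi)^{s+k-1}/\Gamma(s+k-1)$, which is entire and nowhere-vanishing in $s$), meromorphic continuation to all of $\mathbb{C}^2$ will follow once both pieces are shown to continue meromorphically to $\mathbb{C}^2$, and the polar set will be the union of the polar sets of each piece.

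For the discrete component, I will read off the poles directly from~\eqref{eq:discrete_spectrum2}. The factor $\Gamma(s - \tfrac{1}{2} \pm it_j)/\Gamma(s)$ contributes the family of poles $s = \tfrac{1}{2} \pm it_j - r$ for $r \in \mathbb{Z}_{\geq 0}$, giving (1). The factor $1/\zeta^{(2N)}(2s+2w-k)$ contributes poles on the lines $2s + 2w - k = \rho$ at zeros $\rho$ of $\zeta^{(2N)}$, contributing to (2). The remaining factors $\langle V, \mu_j\rangle$ and $L(s+w-\tfrac{k}{2}, \mu_j \otimes \overline{f_1})$ are, respectively, constant in $(s,w)$ and entire (the Rankin--Selberg convolution of two cusp forms is entire, or equivalently follows from~\eqref{equation:discrete_functional_equation} and the holomorphy of $\zeta^{(2N)}(2s)E_\infty^k$ away from $s=0,1$ combined with the cuspidality of $f_1$). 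Absolute convergence of the $j$-sum away from these poles, established in Section~4.1 using Lemma~\ref{lem:jeff_bound} and the exponential decay of the gamma ratio, ensures no new poles arise from the sum itself.

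For the continuous component I will combine the deformed contour integral in~\eqref{eq:continuous_ref} with the residues $\mathcal{R}^{\pm}$. After shifting past $z = s - \tfrac{1}{2}$ and then back past $z = \tfrac{1}{2} - s$, the integral on the line $\Re z = 0$ is analytic for $-\tfrac{1}{2} < \Re s < \tfrac{1}{2}$ and $\Re(s+w) > \tfrac{k}{2} + \tfrac{1}{2}$, and the iterative argument sketched after~\eqref{eq:continuous_ref} (following~\cite{Jeff,hkldw1}) extends this to all of $\mathbb{C}^2$ at the cost of further explicit residues, each obviously meromorphic. Inside the region $\Re s + \Re w > k/2$, $\Re s > 0$, the only poles introduced come from $\mathcal{R}^{\pm}$: the $\Gamma(2s-1)$ factor gives the pole $s = \tfrac{1}{2}$ of (3), and the factor $E_\infty^k(\cdot, \overline{s+w-k/2})$ appearing in both residues contains, once completed by $\zeta^{(2N)}(2s+2w-k)$, another source of poles along $2s+2w-k = \rho$, which is already subsumed in (2).

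The main obstacle is the bookkeeping of poles in the residual terms: the ratios $\Gamma(s+w)/(\Gamma(w)\Gamma(2s+w-1))$ and the Eisenstein inner products $\langle V, E_\mathfrak{a}(\cdot, 1-\overline{s}; 2N)\rangle$, $\langle E_\mathfrak{a}(\cdot, s; 2N) \cdot \Im(\cdot)^{k/2} \overline{f_1}, E_\infty^k(\cdot, \overline{s+w-k/2})\rangle$ must be checked to contribute no further poles inside the specified region. Here I would use that the polar divisors of $\Gamma(s+w)$ and of the Eisenstein series at $s = 1$ lie outside $\{\Re s + \Re w > k/2,\ \Re s > 0\}$ once $k \geq 2$, so the only surviving polar contributions are those already listed in (1), (2), (3). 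Collecting these observations yields the theorem as stated.
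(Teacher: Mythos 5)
Your proposal follows essentially the same route as the paper, which presents Theorem~\ref{thm:D(s,w)_poles} as a summary of the preceding section: the discrete poles are read off from~\eqref{eq:discrete_spectrum2} via Lemma~\ref{lem:disc}, and the continuous poles from the contour-shifted form of~\eqref{eq:continuous_ref} together with the residual terms $\mathcal{R}^{\pm}$. The assembly and the identification of the three polar families match the paper's argument, so the proposal is correct in approach (your stated reason for discarding the Eisenstein pole at $s=1$ is slightly off --- that point does satisfy $\Re s>0$ --- but the conclusion stands since the undeformed integral representation is already analytic near $\Re s = 1$).
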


\section{Bounds on Triple Correlation Sums}

In this section we consider a double integral transform of the form
\begin{equation}\label{eq:reference_integral_transform}
\begin{split}
  \int_{\sigma_w -i\infty}^{\sigma_w + i\infty}
  &\int_{\sigma_s -i\infty}^{\sigma_s + i\infty}
  D(s, w)
  X^{s+k-1} Y^w
  \Gamma(s + k - 1) \Gamma(w)
  \, ds \, dw
  \\
  &=
  \sum_{h, m \geq 1}
  a(h) b(m) c(2m - h) e^{-m/X} e^{-h/Y}
\end{split}
\end{equation}
in order to prove our theorems concerning the sizes of the triple correlation
sums.
Initially, we take the lines of integration to be $\Re s > 1$ and $\Re w >
\frac{k+1}{2}$, within the domain of absolute convergence of $D(s, w)$.

Our main theorem follows quickly from the meromorphic description of $D(s, w)$
and from recognizing that $D(s, w)$ grows at most polynomially in vertical
strips.

\subsection{Polynomial growth}

We now examine the discrete component~\eqref{eq:discrete_spectrum2}.
Let $\sigma_s := \Re s$ and $\sigma_w := \Re w$.
Stirling's approximation demonstrates that the exponential contribution from the
gamma factors is
\begin{equation}
  \exp\big(
    \pi \lvert \Im s \rvert
    -
    \pi \max( \lvert t_j \rvert, \lvert \Im s \rvert)
  \big),
\end{equation}
wherein we've used that $\lvert a + b \rvert + \lvert a - b \rvert = 2
\max(a, b)$ to simplify.
There is no other source of exponential growth.
Thus for $\lvert t_j \rvert > \lvert \Im s \rvert^{1+\epsilon}$, the sum over $t_j$ has
exponential decay and rapidly converges; this is quickly seen to not be the
dominant contribution.
For $\lvert t_j \rvert \leq \lvert \Im s \rvert^{1+\epsilon}$, there is no exponential
contribution, and more care must be given.

It follows from Stirling's approximation that the polynomial contribution from
the gamma factors is
\begin{equation}
  \frac{%
    {(1 + \lvert \Im s + t_j \rvert)}^{\sigma_s - 1}
    {(1 + \lvert \Im s - t_j \rvert)}^{\sigma_s - 1}
  }
  {
    {(1 + \lvert \Im s \rvert)}^{2\sigma_s + k - 2}
  }.
\end{equation}
In the region $\lvert t_j \rvert \leq \lvert \Im s \rvert^{1+\epsilon}$, this is clearly of
polynomial growth in $\lvert \Im s \rvert$.

To understand the rest of the $j$-sum, it is necessary to decouple the growth in
$t_j$ from the $L$-function.
Writing each $L$-function as $L(s, \mu_j \otimes \overline{f_1}) =
\rho_j(1) \widetilde{L}(s, \mu_j \otimes \overline{f_1})$ effectively
focuses the $j$-dependence into the $\rho_j(1)$ term, while classical Hecke and
convexity bounds can handle $\widetilde{L}(s, \mu_j \otimes \overline{f_1})$.

In particular, the convexity bound, the functional
equation~\eqref{equation:discrete_functional_equation}, and the classical bound
$1/\zeta(1 + z) \ll \lvert \log z \rvert$~\cite[3.11.10]{titchmarsh1986theory}
show that for $\Re (s + w) \geq \frac{k}{2}$, we have the bound
\begin{equation*}
  \frac{\widetilde{L}(s + w - \frac{k}{2}, \mu_j \otimes \overline{f_1})}
  {\zeta(2s + 2w - k)}
  \ll
  {(1 + \lvert \Im s + \Im w \rvert)}^{2 + \epsilon}
\end{equation*}
for any $\epsilon > 0$. More generally, away from poles, this is of polynomial
growth in $\lvert \Im s \rvert$ and $\lvert \Im w \rvert$ in vertical strips.

Bounding by absolute values and applying Lemma~\ref{lem:jeff_bound} to the sum
over those $t_j$ with $\lvert t_j \rvert \leq \lvert \Im s \rvert^{1+\epsilon}$, it follows
that, away from poles, the discrete component is of polynomial growth in $\lvert
\Im s \rvert$ and $\lvert \Im w \rvert$ in vertical strips.

The continuous component~\eqref{eq:continuous_sum_ref} is very similar and gives
almost the same bound.
The four gamma factors containing an $s$ are identical to the four appearing in
the discrete spectrum, except with $t$ in place of $t_j$; correspondingly
the analysis with Stirling's formula carries over. There is rapid exponential
decay when $\lvert t \rvert > \lvert \Im s \rvert^{1+\epsilon}$, and polynomial growth
otherwise.

The convexity bound shows that, for $\Re (s + w) > \frac{k}{2}$,
\begin{equation*}
  \Big\lvert
  \sum_h
  \frac{a(h)\rho_\mathfrak{a}(\frac{1}{2} + it, h)}
       {h^{s + w - \frac{1}{2} - it}}
  \Big\rvert
  \ll
  {\big(1 + \lvert \Im s + \Im w \rvert + \lvert t \rvert\big)}^{2 + \epsilon}
\end{equation*}
for any $\epsilon > 0$. We note that to get this convexity bound, we regard this
Dirichlet series as being of Rankin-Selberg type. One can use the functional
equation of the Eisenstein series in~\eqref{eq:cont_ds_is_RS} to understand the
functional equation of this Dirichlet series. More generally, away from poles it
is clear that this Rankin-Selberg type Dirichlet series has polynomial growth in
vertical strips.

Finally, combining these bounds together with Lemma~\ref{lem:jeff_bound_cont},
we see that the integral has exponential decay and doesn't contribute
meaningfully for $\lvert t \rvert > \lvert \Im s \rvert^{1+\epsilon}$, and otherwise is of
polynomial growth in $\lvert \Im s \rvert$ and $\lvert \Im w \rvert$.

Thus the continuous component is of polynomial growth in vertical strips, away
from poles.

\subsection{Proofs of Main Results}

We first prove a result analogous to simultaneous square-root cancellation
in each variable.

\begin{theorem}\label{thm:full_main}
  Let $a(\cdot)$ denote the coefficients of a holomorphic cuspidal eigenform of
  weight $k$, level $N$, and trivial nebentypus. Then for any $\epsilon > 0$,
  \begin{equation*}
    \sum_{m, h \geq 1}
    a(h)b(m)c(2m-h) e^{-m/X} e^{-h/Y}
    \ll
    X^{k-1 + \Theta + \frac{1}{2} + \epsilon}
    Y^{\frac{k-1}{2} + \frac{1}{2} - \Theta + \epsilon},
  \end{equation*}
  where $\Theta<7/64$ is the best bound towards the non-holomorphic
  Ramanujan-Petersson conjecture.
\end{theorem}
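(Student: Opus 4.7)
The plan is to evaluate the double Mellin--Barnes integral in~\eqref{eq:reference_integral_transform} by shifting both contours leftward through the meromorphic domain of $D(s,w)$ described in Theorem~\ref{thm:D(s,w)_poles}, and then bound the resulting shifted integral using the polynomial growth established in the previous subsection. Specifically, I would move the lines of integration from their initial positions to
\begin{equation*}
    \sigma_s = \tfrac{1}{2} + \Theta + \epsilon, \qquad \sigma_w = \tfrac{k}{2} - \Theta + \epsilon,
\end{equation*}
on which the factor $X^{s+k-1}Y^w$ has absolute value exactly $X^{k-1 + \Theta + \frac{1}{2} + \epsilon} Y^{\frac{k-1}{2} + \frac{1}{2} - \Theta + \epsilon}$, matching the bound claimed by the theorem.

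The first key step is to verify that no pole from the three families listed in Theorem~\ref{thm:D(s,w)_poles} is crossed, so that no residues contribute. The discrete spectral poles at $s = \tfrac{1}{2} \pm it_j - r$ reach maximum real part $\tfrac{1}{2} + \Theta$, attained only for exceptional Maass forms with $t_j = i\theta_j$ and $|\theta_j| \leq \Theta$; the chosen $\sigma_s$ stays just to the right of this. The residual-term pole at $s = \tfrac{1}{2}$ also lies to the left of $\sigma_s$. The zeros of $\zeta^{(2N)}$ generate pole loci $2s+2w-k=\rho$ with $\Re\rho \in (0,1)$, i.e., on $\Re(s+w) \in (\tfrac{k}{2}, \tfrac{k+1}{2})$; on the final contour $\Re(s+w) = \tfrac{k+1}{2} + 2\epsilon$, staying to the right of every such locus. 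Performing the deformation one variable at a time, say lowering $\sigma_w$ first with $\sigma_s$ still large and then lowering $\sigma_s$, keeps $\Re(s+w)$ above this threshold throughout. The Gamma poles from $\Gamma(s+k-1)$ and $\Gamma(w)$ lie far to the left of the final contours. Thus the deformation is pole-free, and the triple correlation sum equals the shifted double integral exactly.

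What remains is to bound that integral on the target contours. Here I would invoke the polynomial growth of $D(s,w)$ in vertical strips already proved for both the discrete and continuous components, combined with the exponential decay of $\Gamma(s+k-1)$ in $|\Im s|$ and of $\Gamma(w)$ in $|\Im w|$ from Stirling. These combine to make the double integral absolutely convergent and bounded by a constant times the claimed $X^{k-1+\Theta+\frac12+\epsilon}Y^{\frac{k-1}{2}+\frac12-\Theta+\epsilon}$. The main technical obstacle is that the target $\sigma_s, \sigma_w$ place the Rankin--Selberg $L$-function $L(s+w-\tfrac{k}{2}, \mu_j \otimes \overline{f_1})$ on the line $\Re = \tfrac{1}{2} + 2\epsilon$, at the very edge of the critical strip, so genuine convexity together with uniformity in $j$ is required; the decoupling $L = \rho_j(1)\widetilde{L}$ concentrates all $j$-dependence in $\rho_j(1)\langle V, \mu_j \rangle$, which is averaged by Lemma~\ref{lem:jeff_bound}, while classical convexity handles $\widetilde{L}$. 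Because $\Re(2s+2w-k) = 1 + 4\epsilon$ lies in the standard zero-free region (using $\Theta < \tfrac{1}{2}$), the factor $1/\zeta^{(2N)}(2s+2w-k)$ is harmless. The analogous estimate for the continuous component follows from Lemma~\ref{lem:jeff_bound_cont} together with the Rankin--Selberg convexity bound recorded in the polynomial-growth subsection, completing the argument.
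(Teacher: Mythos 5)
Your proposal is correct and follows essentially the same route as the paper: shift the contours in~\eqref{eq:reference_integral_transform} to $\Re s = \tfrac{1}{2}+\Theta+\epsilon$ and $\Re w = \tfrac{k}{2}-\Theta+\epsilon$ (which is exactly the paper's $\tfrac{k-1}{2}+\tfrac{1}{2}-\Theta+\epsilon$), check via Theorem~\ref{thm:D(s,w)_poles} that no poles are crossed, and bound the shifted integral using the polynomial growth of $D(s,w)$ against the exponential decay of $\Gamma(s+k-1)\Gamma(w)$. Your additional bookkeeping on the zeta-zero loci and the edge-of-critical-strip convexity estimate is more explicit than the paper's one-paragraph proof but does not change the argument.
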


\begin{proof}

Consider the inverse Mellin transform~\eqref{eq:reference_integral_transform}.
By Theorem~\ref{thm:D(s,w)_poles}, $D(s,w)$ is holomorphic when
$\Re s > \frac{k+1}{2}-\Re w$ and $\Re s> \frac{1}{2} \pm \max(\Re t_j)$, where
$t_j$ ranges over the types of the Maass forms in the discrete spectrum.
Shifting lines to $\Re s = \frac{1}{2} + \Theta+\epsilon$ and
$\Re w = \frac{k-1}{2} + \frac{1}{2} - \Theta + \epsilon$ avoids all poles.
The shifted integral clearly converges since $D(s, w)$ is of polynomial growth
in $\lvert \Im s \rvert$ and $\lvert \Im w \rvert$
while $\Gamma(s+k-1)\Gamma(w)$ have exponential decay in $\lvert \Im s \rvert$
and $\lvert \Im w \rvert$.

\end{proof}

Assuming the Riemann Hypothesis, it is possible to further shift the $w$
variable by an additional $1/4$ before encountering the poles from the zeta
functions in the denominator. Thus we also have the following theorem.

\begin{theorem}
  Assume the Riemann Hypothesis.
  Using the same notation as in Theorem~\ref{thm:full_main}, for any $\epsilon >
  0$ we have
  \begin{equation*}
    \sum_{m, h \geq 1}
    a(h)b(m)c(2m-h) e^{-m/X} e^{-h/Y}
    \ll
    X^{k - 1 + \frac{1}{2} + \Theta + \epsilon}
    Y^{\frac{k-1}{2} + \frac{1}{4}-\Theta + \epsilon}.
  \end{equation*}
\end{theorem}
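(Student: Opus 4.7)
The plan is to repeat the contour-shifting argument used to prove Theorem~\ref{thm:full_main}, starting from the inverse Mellin transform~\eqref{eq:reference_integral_transform} on lines with $\sigma_s > 1$ and $\sigma_w > \tfrac{k+1}{2}$, and to exploit the Riemann Hypothesis to push the $w$-contour further to the left. By Theorem~\ref{thm:D(s,w)_poles}, the potential poles in the region of interest are of three types: the Selberg-type poles at $s = \tfrac{1}{2} \pm it_j - r$ from the discrete spectrum, the zeta-zero poles along $2s + 2w - k = \rho$, and the isolated pole at $s = \tfrac{1}{2}$ from the residual terms $\mathcal{R}^\pm$. Choosing $\sigma_s = \tfrac{1}{2} + \Theta + \epsilon$ already clears the first and third types, as in the unconditional theorem.

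Under RH, every nontrivial zero $\rho$ of $\zeta^{(2N)}(\cdot)$ satisfies $\Re(\rho) = \tfrac{1}{2}$, so the zeta-zero locus becomes $\Re(s+w) = \tfrac{k}{2} + \tfrac{1}{4}$. Consequently, I can take
\begin{equation*}
  \sigma_s = \tfrac{1}{2} + \Theta + \epsilon,
  \qquad
  \sigma_w = \tfrac{k-1}{2} + \tfrac{1}{4} - \Theta + \epsilon,
\end{equation*}
so that $\sigma_s + \sigma_w = \tfrac{k}{2} + \tfrac{1}{4} + 2\epsilon$ remains just to the right of the critical line for $\zeta^{(2N)}(2s+2w-k)$, and no poles of $D(s,w)$ are crossed during the shift. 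Substituting into the Mellin transform yields $X^{k - 1 + \frac{1}{2} + \Theta + \epsilon} Y^{\frac{k-1}{2} + \frac{1}{4} - \Theta + \epsilon}$, which is the claimed bound.

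What must be checked is that the polynomial growth established in the previous section survives on this more aggressive contour. Away from poles, the gamma quotients and the Rankin--Selberg $L$-factors appearing in~\eqref{eq:discrete_spectrum2} and~\eqref{eq:continuous_ref} are of polynomial growth in $\lvert \Im s \rvert$ and $\lvert \Im w \rvert$ exactly as before, so the only new ingredient is control of $1/\zeta^{(2N)}(2s+2w-k)$ on the line $\Re(2s+2w-k) = \tfrac{1}{2} + 4\epsilon$. The hard point — the only substantive obstacle — is to replace the bound $1/\zeta(1+z) \ll \lvert \log z \rvert$ used previously by an RH-based bound such as $1/\zeta(\sigma+it) \ll \lvert t \rvert^{\epsilon}$ valid for any $\sigma > \tfrac{1}{2}$, which is a standard consequence of the Riemann Hypothesis (see, e.g.,~\cite{titchmarsh1986theory}). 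Once this is in place, the exponential decay of $\Gamma(s+k-1)\Gamma(w)$ along vertical lines absorbs all polynomial factors, the shifted double integral converges absolutely, and the theorem follows.
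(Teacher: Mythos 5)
Your proposal is correct and follows essentially the same route as the paper: the paper's own proof is just the observation that under RH the poles along $2s+2w-k=\rho$ retreat to $\Re(s+w)=\tfrac{k}{2}+\tfrac{1}{4}$, permitting the $w$-line to be shifted an additional $\tfrac{1}{4}$ beyond the unconditional position while keeping $\sigma_s=\tfrac12+\Theta+\epsilon$. Your additional remark that one needs an RH-based polynomial bound on $1/\zeta(2s+2w-k)$ near the critical line (in place of $1/\zeta(1+z)\ll|\log z|$) is a detail the paper leaves implicit, and you handle it correctly.
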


\section{Nonvanishing result}

We now prove that, under mild hypotheses, infinitely many products
$a(h)b(m)c(2m-h)$ do not vanish. To do this, we specialize $w$ and examine under
what conditions the residues of poles coming from the discrete spectrum do not
vanish. We note that it would also be possible to consider poles corresponding
to zeros of $\zeta^{(2N)}$, occurring in the continuous spectrum.

We fix $w = \frac{k}{2} + \frac{3}{2}$ and examine potential poles with $\Re s >
0$. This guarantees that $\Re s + \Re w > \frac{k}{2}$.
Theorem~\ref{thm:D(s,w)_poles} indicates that there are potential poles at
$s = \frac{1}{2} \pm it_j$ occurring from the discrete spectrum.

From the description of the discrete spectrum in Lemma~\ref{lem:disc}, we
compute that the residue at $s = \frac{1}{2} \pm it_j$ is
\begin{equation}
  \frac{{(8\pi)}^{k - \frac{1}{2} \pm it_j}}{\Gamma(k - \frac{1}{2} \pm it_j)}
  \frac{\sqrt{\pi}}{{(4\pi)}^{\pm it_j}}
  \frac{\Gamma(\pm 2 i t_j)}{\Gamma(\frac{1}{2} \pm it_j)}
  \langle V, \mu_j \rangle
  \frac{L(2 \pm it_j, \mu_j \otimes \overline{f_1})}
  {\zeta^{(2N)}(4 \pm 2 it_j)}.
\end{equation}
The ratios of Gamma functions and powers of $2$ and $\pi$ are some nonzero
constant $C_j$. It remains only to consider the inner product $\langle V,
\mu_j\rangle$, the $L$-function, and the $\zeta$-function.

The zeta function is considered far within the region of absolute convergence,
and thus is evaluated far from poles and zeros. Similarly, the
Rankin--Selberg 
convolution $L(2 \pm it_j, \mu_j \otimes \overline{f_1})$ is evaluated within
its domain of absolute convergence. As both $L(s, \mu_j)$ and $L(s, f_1)$ have
Euler products, the general theory of Rankin--Selberg 
convolutions guarantees that $L(s, \mu_j \otimes \overline{f_1})$ has an Euler
product (see for instance Chapter~12 of~\cite{Goldfeld2006automorphic}). As no
factor of the Euler product is zero within the domain of absolute convergence,
we see that $L(2 \pm it_j, \mu_j \otimes \overline{f_1}) \neq 0$.
Thus every factor in the residue is nonzero with the possible exception of
$\langle V, \mu_j \rangle$.

Let us suppose that there is a Maass form $\mu_j$ such that $\langle V, \mu_j
\rangle \neq 0$ and such that $t_j \neq 0$ is real. Let us fix that form
$\mu_j$.

\begin{remark}
Note that the Laplacian eigenvalue of $\mu_j$ is $\lambda = (\frac{1}{2} +
it_j)(\frac{1}{2} - it_j)$ and that $\lambda$ is real and nonnegative. Thus
$it_j$ is either purely real or purely imaginary. If $it_j$ is purely real, then
the Maass form is \emph{exceptional}. The condition that $t_j \neq 0$ is real is
thus equivalent to $\lambda > \frac{1}{4}$.
\end{remark}

The Dirichlet series
\begin{equation}
  D(s, \tfrac{k}{2} + \tfrac{3}{2})
  =
  \sum_{m \geq 1} \Bigg(
    \sum_{h \geq 1} \frac{a(h) c(2m - h)}{h^{\frac{k}{2} + \frac{3}{2}}}
  \Bigg) \frac{b(m)}{m^{s}}
\end{equation}
can be regarded as a single Dirichlet series in $s$ with meromorphic
continuation to $\mathbb{C}$. Note that summing over $h \geq 1$ is equivalent to
summing over $h \leq 2X$, as for $h > 2X$ we have $c(2m - h) = 0$. As $\langle
V, \mu_j \rangle \neq 0$, we see that this Dirichlet series has a pair of poles
at $s = k - \frac{1}{2} \pm it_j$.

Then Theorem~1 of~\cite{dldomega} applies and yields the following result.

\begin{theorem}
  Let $\mu_j$ be a non-constant Maass form such that $\langle V, \mu_j \rangle
  \neq 0$, and let $\mathrm{MT}(X)$ denote the sum of the residues of $D(s,
  \tfrac{k}{2} + \tfrac{3}{2})X^s/s$ at all real poles $s = \sigma$ with $\sigma \geq
  k - \tfrac{1}{2}$. Then
  \begin{equation}
    \sum_{m \leq X} \sum_{h \leq 2X}
    \frac{a(h)b(m)c(2m-h)}
         {h^{\frac{k}{2} + \frac{3}{2}}}
  - \mathrm{MT}(X)
  = \Omega_\pm(X^{k - \frac{1}{2}})
  \end{equation}
\end{theorem}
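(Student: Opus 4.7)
The plan is to recognize the double truncated sum as the partial sum of Dirichlet coefficients of $F(s) := D(s, k/2 + 3/2)$, viewed as a one-variable series in $s$, and then invoke Theorem~1 of~\cite{dldomega} directly. First I would observe that the truncation $h \leq 2X$ is automatic in this range: $c(2m-h)$ vanishes unless $h < 2m$, and $m \leq X$ forces $2m \leq 2X$. Writing $e(m) := b(m)\sum_{h \geq 1} a(h)c(2m-h)/h^{k/2 + 3/2}$, the double sum equals the partial Dirichlet coefficient sum of $F$, which is exactly the object to which the $\Omega$-theorem applies.

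Next I would check the analytic hypotheses needed by~\cite{dldomega}. Meromorphic continuation of $F$ to $\mathbb{C}$ is immediate from Theorem~\ref{thm:D(s,w)_poles} by specialization, and polynomial growth in vertical strips carries over from the two-variable bounds established earlier. With $w = k/2 + 3/2$ one has $2s + 2w - k = 2s + 3$, so for $\Re s \geq 0$ the $\zeta^{(2N)}$ denominator of the discrete spectrum and the Eisenstein factor in the continuous spectrum contribute no poles in the relevant half-plane; the residual terms $\mathcal{R}^\pm$ contribute only the real pole at $s = 1/2$, which is absorbed into $\mathrm{MT}(X)$. The only remaining poles with real part at least $k - 1/2$ are therefore the conjugate pair $s = k - 1/2 \pm it_j$ coming from the gamma factors in the discrete spectrum.

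The residue at each of these poles, as computed in the discussion preceding the statement, is a nonzero constant times $\langle V, \mu_j \rangle \cdot L(2 \pm it_j, \mu_j \otimes \overline{f_1}) / \zeta^{(2N)}(4 \pm 2 it_j)$. The Rankin--Selberg $L$-value and the $\zeta^{(2N)}$ value both lie in the region of absolute convergence of their Euler products and are therefore nonzero, while $\langle V, \mu_j \rangle \neq 0$ is part of the hypothesis and the assumption $\lambda_j > 1/4$ ensures that $t_j \neq 0$ is real. With meromorphic continuation, polynomial vertical growth, and a conjugate pair of simple complex poles with nonzero residues sitting at real part $k - 1/2$, Theorem~1 of~\cite{dldomega} then delivers the $\Omega_\pm(X^{k-1/2})$ conclusion.

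The main obstacle is the residue nonvanishing, and within that the one genuinely non-automatic ingredient is $\langle V, \mu_j \rangle \neq 0$, which is precisely why it is imposed as a hypothesis; everything else is analytic bookkeeping inherited from Sections~\ref{sec:spectral_expansion} and~\ref{sec:Dsw}.
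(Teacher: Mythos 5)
Your proposal is correct and follows essentially the same route as the paper: specialize $w=\tfrac{k}{2}+\tfrac{3}{2}$, note the $h\le 2X$ truncation is automatic since $c(2m-h)=0$ for $h>2m$, view $D(s,\tfrac{k}{2}+\tfrac{3}{2})$ as a one-variable Dirichlet series with a conjugate pair of poles at $s=k-\tfrac12\pm it_j$ whose residues are nonzero multiples of $\langle V,\mu_j\rangle\, L(2\pm it_j,\mu_j\otimes\overline{f_1})/\zeta^{(2N)}(4\pm 2it_j)$, and invoke Theorem~1 of~\cite{dldomega}. The only cosmetic issue is a mild mixing of the $m^{s+k-1}$ and $m^{s}$ normalizations when locating the poles (the paper itself does the same), which does not affect the argument.
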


This is a more precise statement of Theorem~\ref{thm:omega}.
As an immediate corollary, it follows that infinitely many triples
$a(h)b(m)c(2m-h)$ are non-vanishing.

\begin{remark}
The main term $\mathrm{MT}(X)$ consists of residues at exceptional eigenvalues
and the residue at $s = k - \tfrac{1}{2}$. In cases where we expect Selberg's
Eigenvalue Conjecture to hold, the main term will arise entirely out of the
pole at $s = k - \tfrac{1}{2}$.
\end{remark}

\bibliographystyle{alpha}
\bibliography{jobbib}

\end{document}